\documentclass[a4paper,american,reqno]{amsart}

\newif\ifPreprint \Preprinttrue
\newif\ifSubmission \Submissionfalse

\usepackage{babel}
\usepackage[utf8]{inputenc}
\usepackage[T1]{fontenc}
\usepackage{lmodern}
\usepackage{xspace}
\usepackage{mathtools}
\usepackage{amsmath}
\usepackage{enumitem}
\usepackage{accents}
\usepackage{todonotes}
\usepackage{csquotes}
\usepackage{microtype}
\usepackage[citestyle=alphabetic, 
            bibstyle=alphabetic, 
            firstinits = true,
            maxbibnames = 100,
            maxcitenames = 2,
            isbn = false,
            backend = biber]{biblatex}
\usepackage{hyperref}
\hypersetup{colorlinks,
            citecolor=blue,
            urlcolor=blue,
            linkcolor=blue}
\usepackage{cleveref}

\usepackage[a4paper,top=2cm,left=2cm,right=2cm,bottom=2cm]{geometry}
\usepackage{amssymb}
\usepackage{hyperref}
\usepackage{todonotes}
\usepackage{amsthm, mathtools}

\usepackage[T1]{fontenc}
\usepackage[utf8]{inputenc}
\usepackage{color}
\usepackage{graphicx}
\usepackage{algpseudocode}
\usepackage{algorithm}
\usepackage{cleveref}
\usepackage{bbm}

\makeatletter
\patchcmd{\@settitle}{\uppercasenonmath\@title}{\scshape\large}{}{}
\patchcmd{\@setauthors}{\MakeUppercase}{\scshape\normalsize}{}{}
\makeatother

\tolerance 1414
\hbadness 1414
\emergencystretch 1.5em
\hfuzz 0.3pt
\widowpenalty=10000
\vfuzz \hfuzz
\raggedbottom

\algdef{SE}[SUBALG]{Indent}{EndIndent}{}{\algorithmicend\ }%
\algtext*{Indent}
\algtext*{EndIndent}
\theoremstyle{plain}
\newtheorem{assumption}{Assumption}
\newtheorem{lemma}{Lemma}
\newtheorem{coroll}{Corollary}
\newtheorem{theorem}{Theorem}

\theoremstyle{definition}

\newtheorem{remark}{Remark}

\theoremstyle{remark}
\setlength{\parindent}{0pt}

\newcommand{\Bcal}{\mathcal{B}}

\newcommand{\Lcal}{\mathcal{L}}
\newcommand{\Ncal}{\mathcal{N}}

\newcommand{\Pcal}{\mathcal{P}}
\newcommand{\Xcal}{\mathcal{X}}
\newcommand{\Ycal}{\mathcal{Y}}
\renewcommand{\P}{\mathbb{P}}
\newcommand{\E}{\mathbb{E}}
\newcommand{\env}{(u_\text{env})}
\newcommand{\soc}{\text{soc}}


\newcommand{\st}{\text{s.t.}}

\newcommand{\R}{\mathbb{R}}






\usepackage{xcolor}
\usepackage{pgfplots}
\pgfplotsset{width=10cm,compat=1.9}

\usepgfplotslibrary{external}
\tikzexternalize

\bibliography{bibliography}

\begin{document}
\title{A mixed-integer approximation of robust optimization problems with mixed-integer adjustments}

\author[J. Kronqvist, B. Li, J. Rolfes]%
{Jan Kronqvist, Boda Li, Jan Rolfes}

\address[J. Kronqvist, J. Rolfes]{%
  KTH - Royal Institute of Technology, Stockholm,
  Sweden, Department of Mathematics, Lindtstedtsvägen 25, SE-100 44 Stockholm;
  Sweden}
\email{\{jankr,jrolfes\}@kth.se}

\address[B. Li]{%
  ABB Corporate Research Center, Ladenburg, Germany, 
  Wallstadter Str. 59, 68526 Ladenburg}
\email{\{davidlee95th\}@gmail.com}

\date{\today}

\maketitle

\begin{abstract}
    In the present article we propose a mixed-integer approximation of adjustable-robust optimization (ARO) problems, that have both, continuous and discrete variables on the lowest level. As these trilevel problems are notoriously hard to solve, we restrict ourselves to weakly-connected instances. Our approach allows us to approximate, and in some cases exactly represent, the trilevel problem as a single-level mixed-integer problem. This allows us to leverage the computational efficiency of state-of-the-art mixed-integer programming solvers. We demonstrate the value of this approach by applying it to the optimization of power systems, particularly to the control of smart converters.
\end{abstract}
\section{Introduction}
Optimization under uncertainty is a very active and growing subfield in mathematical optimization as in many real world applications, the input data is either not known in advance or may be subject to perturbations. In the current literature two major approaches have been developed in order to address this challenge, stochastic optimization (SO), see e.g. \cite{Birge2011a} for further details, and robust optimization (RO), see e.g. \cite{bertsimas2011theory}. In stochastic optimization it is generally assumed that the underlying probability distribution of the uncertain parameters is known, whereas robust optimization methods do not require any knowledge about the distribution but assume that the uncertain data is contained in a predefined set of scenarios. In addition, robustly optimal solutions tend to be rather conservative as one considers the worst-case scenario within the predefined uncertainty set. 

In order to address the conservativeness inherent in robust optimization, adjustable robust optimization (ARO) was introduced in \cite{ben2004adjustable} and grew to a very active subarea in robust optimization. On the one hand, it serves as a natural extension of the concept of RO, where uncertainties are not only addressed a priori by making first stage or here-and-now decisions, but also allowing to react to the realization of the uncertainty by second stage or wait-and-see decisions. Hence, ARO is often used to provide more competitive solutions without sacrificing the uncertainty protection that is provided by RO. This is particularly useful in applications such as gas networks, see e.g. \cite{Assmann2018a} and the seminal unit commitment problem in power systems, see e.g. \cite{Lorca2017a}.

Methodically, AROs are approached either by approximation schemes or by exact reformulations. In \cite{ben2004adjustable} an approximation scheme was presented, where the second stage decisions are restricted to affinely depend on the uncertain parameters. This so-called affine decision rule significantly simplifies the underlying trilevel problem and often leads to tractable optimization problems such as linear or semidefinite programs. Additionally, more elaborate decision rules such as piecewise-linear decision rules, see \cite{bertsimas2015design}, can also address mixed-integer AROs. For further details, we refer to the excellent survey \cite{Yanikoglu2019a}.

Exact reformulations of AROs are usually computationally intractable, particularly if non-convexities such as binary variables are involved. Consequently the literature pivots to decomposition approaches, see \cite{Bienstock2008a}, \cite{zeng2013solving} and \cite{Bertsimas2013a}. Furthermore, in \cite{Avraamidou2019a} a parametric programming approach is given, that solves AROs with a small number of binary variables to global optimality. For larger instances however, parametric programming as of now seems to fall behind decomposition approaches in terms of computational runtime. Hence, the present work aims to contribute to close this computational gap. To this end, we introduce a single-level MIP, that approximates a significant class of AROs and can be proven to be exact under some further assumptions.

As mentioned above, adjustable robust optimization has been active research topic in recent years and it has been found useful in a large variety of areas. Applications dealing with adjustable robustness include, model predictive control \cite{tejeda2019explicit} for dealing with process-model mismatch, process scheduling under uncertainty \cite{lappas2016multi,li2008robust,lin2004new,grossmann2016recent}, multi-task scheduling with imperfect tasks \cite{lappas2019adjustable}, for analysing resiliency and flexibility of chemical process \cite{grossmann2014evolution,zhang2016relation}, to handle uncertainty in supply chains \cite{ben2005retailer,buhayenko2017adjustable}, and portfolio optimization \cite{takeda2008adjustable}. Here, we will not go into more details on these applications, but will elaborate on the application of AROs to find the optimal operating points of smart converters in power systems.

Renewable energy, particularly wind and photovoltaics, has seen rapid growth in recent years \cite{bhandari2015optimization}. These energy sources are widely connected to the power grid, which can greatly reduce the use of fossil fuels and electricity costs. However, this integration also introduces power uncertainty that can threaten the stability and safety of the grid \cite{wei2014robust}.

Smart inverters \cite{zhao2018power} and energy storage technology \cite{weitemeyer2015integration} provide a solution to this problem. The former controls the power output of renewables, while the latter could smooth power fluctuations through charging and discharging. One of the goals of power system optimization is to balance the use of renewable energy and system safety by setting the operating state and set points of inverters and storages.

Historically, power system operation-related optimization involves deterministic optimization, see e.g. \cite{Anjos2017a} for a broader overview on the seminal unit commitment problem and \cite{wang1995short} for an application to security-constrained unit commitment or \cite{sanders1987algorithm} for security-constrained economic dispatch, assuming accurate renewable energy forecasting. However, this stream of research does not account for potential uncertainties in the system, which may lead to infeasible operating states. Addressing these challenges has gained increased attention due to the incorporation of renewables in modern power systems. 

Robust optimization has gained attention for its potential to address this issue and has been extensively studied in power system operation. We refer to Conejo et. al. \cite{Conejo2022a} for a brief survey. Moreover, multi-level robust optimization has also be applied to a various challenges that arise in power system operation, such as robust unit commitment \cite{zhao2013unified, lorca2016multistage} or robust optimal power flow \cite{zhang2013robust} to name a few. It is also applied to the energy management of power systems with smart inverters and energy storage \cite{yu2020energy, lekvan2021robust, yang2021robust}. While these studies provide excellent use cases of multi-level optimization models in power systems, they tend to oversimplify the discrete variables in inverter and storage control models by converting them into continuous variables for increased tractability. However, this may result in infeasible solutions in practical applications since we overestimate the possibilities, the grid operator has at its disposal to readjust the grid. Conversely, the proposed method in this work preserves discrete variables and thus guarantees the feasibility of the computed operating points. Moreover, as our only approximation comes from relaxing the adversarial problem posed by the uncertainties in the system, we provide (potentially overly conservative) operating points, whose feasibility can be guaranteed.

This work is structured as follows. In Section \ref{Sec: problem setting}, the adjustable robust problem formulation is introduced. Moreover, we discuss the key assumption, that these problems are weakly connected and present our main results. Section \ref{Sec: powerflow_theory} introduces the application to smart converters in networks and demonstrates the consequences of the approximation results from Section \ref{Sec: problem setting} in this setting. Subsequently, Section \ref{Sec:computationals_results} illustrates the results and presents numerical evidence, that the presented method outperforms existing algorithms on weakly-connected trilevel problems.

\section{A MIP approach to robust optimization with MIP adjustments}\label{Sec: problem setting}
Adjustable robust programming consist of three separate types or \emph{levels} of variables. The first-level variables, denoted in the present paper by $x\in \Xcal$, describe an initial planning approach, which has to be decided first. Then, an uncertainty affects the outcome of this initial planning. Here, we model this uncertainty by a random vector $h\in\Omega\subseteq \R^I$, which is distributed by a probability distribution $\P\in \Pcal(\Omega)$ on a compact domain $\Omega\subseteq \R^I$. The set $\Pcal(\Omega)$ is called the \emph{ambiguity set} of probability measures and the vector $h$ is referred to as the second-level variable. Lastly, the initial planning $x$ can be adjusted to the uncertainty $h$ by choosing the third-level variables $y\in \Ycal(x,h)$, where in the present article, we suppose that $\Ycal(x,h)\subseteq \R^n\times \{0,1\}^l$ is assumed to be defined through linear constraints. Thus, in total the basic DRO setting can be defined as follows:
\begin{equation}\label{Prob: entire_adjustable_DRO}
    \min_{x\in \Xcal} G(x) + \max_{\P\in \Pcal(\Omega)}\left(\E_\P \left(\min_{y\in \Ycal(x,h)} c^\top y\right)\right),
\end{equation}
where $\Xcal$ and $\Omega$ are assumed to be a polytopes and $\mathcal{Y}(x,h)$ a polytope intersected with an integer lattice. However, in the present article, we restrict ourselves to a standard robust ambiguity set, i.e., for the polytope $\Omega$ the ambiguity set is defined as a set of Dirac measures $\Pcal(\Omega)\coloneqq \{\delta_{\{h\}}: h\in \Omega \}$ and aim to solve instances of
\begin{equation}\label{Prob: entire_adjustable_RO}
    \min_{x\in \Xcal} G(x) + \max_{h \in \Omega}\min_{y\in \Ycal(x,h)} c^\top y.
\end{equation}
Note, that \eqref{Prob: entire_adjustable_RO} is still considered to be a very challenging problem as it contains the $\mathrm{NP}$-complete MIP $\min_{y\in \Ycal(x,h)} c^\top y$ as a subproblem, see Section 6 in \cite{Yanikoglu2019a} for further details. In the present article, we focus on turning the above trilevel-problem into a single-level MIP. To this end, we fix the integral part of $y$ and dualize the resulting LP in order to achieve a bilevel problem with a bilinear objective. This objective is then relaxed with McCormick envelopes and dualized again in order to achieve a single-level LP for every fixed integral assignment of $y$. However, instead of evaluating the resulting LP relaxations for every integer combination, we can simply reincorporate the integrality of $y$ as an additional constraint and, thus, obtain a single-level MIP. This approach may lead to overly pessimistic outcomes as using the McCormick relaxations strengthens the adverserial in the multilevel problem, but can be shown to be exact under some assumptions on $h$ and the dual variables of $\min_{y\in \Ycal(x,h)} c^\top y$. 

However, this method may not work on strongly connected multilevel problems since broadly speaking the hardness in multilevel optimization problems comes from strong connections between the levels, we consider the following subclass of adjustable robust problems:

We define a problem of type \eqref{Prob: entire_adjustable_RO} \emph{weakly connected}, if the only relation between the first-,second- and third level variables is a linear relation in $\Ycal(x,h)$:
$$\Ycal(x,h) =\{y\in \R^n\times \{0,1\}^l: A'y\geq b', B y \geq B_x x + B_h h + b_0\},$$
i.e., the first-level variables $x$ and the second-level random vector only affect the right-hand side of the constraints defining $\Ycal(x,h)$. Weakly connected adjustable robust problems include among others, problems with affine decision rules ($B^\top = (I_n,-I_n), B_x=0, B_h^\top = (Q^\top,-Q^\top), b_0 = (q^\top,-q^\top)$, where $Q,q$ can be chosen arbitrarily.

Moreover, in order to work with the levels separately, we denoted by $A'y\geq b'$ the constraints, that solely deal with third-level variables, i.e., that are neither affected by the first-level variables $x$ nor by the second-level variables $h$ and by $By \geq B_x x + B_h h + b_0$
the constraints, that are affected by the upper levels. Furthermore, instances that are particularly well suited for our approach should have relatively few non-zero entries in the matrix $B_h$. Few non-zero entries in $B_h$ is not a strict requirement, but both the approximation quality and computational complexity can scale with the number of non-zero elements. 

In addition, let us consider an LP inner-approximation of the third-level program $\min_{y\in \Ycal(x,h)} c^\top y$, i.e. we fix the integer variables $y_{n+1},\ldots , y_{n+l}\in \{0,1\}$ by a set of linear constraints denoted by $A_f y \geq b_f$. Consequently, we consider the following LP:
\begin{subequations}\label{Prob: third_level_primal_general}
    \begin{align}
        \min\ & c^\top y \\
        \st\ & A'y \geq b',\label{Constr: third_level_non-entangled1}\\
        & A_f y \geq b_f \label{Constr: third_level_non-entangled2}\\
        & By\geq B_x x + B_h h + b_0\label{Constr: third_level_entangled}
    \end{align}
\end{subequations}
Let $A^\top = ((A')^\top,A_f^\top), b^\top = ((b')^\top,b_f^\top)$ and $\alpha$ denote the dual variables that correspond to Constraints \eqref{Constr: third_level_non-entangled1} and \eqref{Constr: third_level_non-entangled2}, i.e. to $Ay\geq b$. Additionally, we denote by $\beta$ the dual variables corresponding to \eqref{Constr: third_level_entangled} and obtain as the dual program of \eqref{Prob: third_level_primal_general}:
\begin{subequations}\label{Prob: third_level_dual_general}
    \begin{align}
        \max\ & b^\top \alpha  + (B_x x + B_h h + b_0)^\top \beta\\
        \st\ & \begin{pmatrix}A^\top & B^\top \end{pmatrix}\begin{pmatrix}\alpha\\ \beta\end{pmatrix} = c,\\
        & \alpha,\beta\geq 0,
    \end{align}
\end{subequations}

We observe, that on the second level the objective decomposes into a bilinear part $(B_h h)^\top \beta = h^\top B_h \beta$ and a linear one $b^\top \alpha  + (B_x x + b_0)^\top \beta$. Moreover, the amount of nonzero entries of $B_h$ crucially determines the number of bilinear terms and thereby the difficulty of computing \eqref{Prob: third_level_dual_general}.

Let $I$ and $J$ denote the index sets of $h$ and $\beta$ respectively. Then, in order to relax the bilinear term $h^\top B_h\beta$, we make the following key assumption:
\begin{assumption}\label{Ass: bounds_on_h_and_beta}
    Both, the dual variables $\beta\in \R^J$ as well as the second-level variables $h\in \R^I$ are bounded, i.e., there are $\beta^-,\beta^+$ and $h^-,h^+$ such that adding
$$h^- \leq h\leq h^+ \text{ and } \beta^-\leq \beta \leq \beta^+$$
to $\Omega$ or \eqref{Prob: third_level_dual_general} does not affect the outcome of \eqref{Prob: entire_adjustable_DRO}.
\end{assumption}
Note that a compact $\Omega$ already implies $h^-\leq h \leq h^+$. Hence, in those cases it suffices to check Assumption \ref{Ass: bounds_on_h_and_beta} for $\beta$. Later, we show that finite, and meaningful, bounds $\beta^-$ and $\beta^+$ can be easily determined for the considered application of finding optimal operating points in power systems. In addition, we mention, that every sharpening of the bounds in Assumption \ref{Ass: bounds_on_h_and_beta} improves the quality of the upcoming results. This is due to the fact, that the boundedness of both, $\beta$ and $h$ enables us to relax the second-level problem $\max_{h\in \Omega} \min_{y\in \Ycal(x,h)} c^\top y$  further using McCormick envelopes for the bilinear terms. In particular, this idea gives rise to the following theorem:

\begin{theorem}\label{Thm: linear_single_level_approximation_general}
    Let the ambiguity set $\Omega$ be a polytope defined by $\Omega=\{h\in \R^I: A_\Omega^\top h +B_\Omega^\top \eta =b_\Omega,\ \eta \geq 0\}$ with $b_\Omega\in \R^k$, i.e. $\Omega$ is compact and $\eta \geq 0$ denote potential nonnegative slack variables in the rows. Let further $\beta^-,\beta^+$ be a lower/upper bound for $\beta$. Then, the following linear program provides an upper bound to \eqref{Prob: entire_adjustable_RO}:
    \begin{subequations}\label{Prob: McCormick_relaxation_three_level_program}
    \begin{align}
        \min\ & G(x) + (\beta^+)^\top u_\beta^+ + (\beta^-)^\top u_\beta^- + b_\Omega^\top u_\Omega + c^\top y\\
        & - \sum_{i\in I, j\in J} \left(h_i^- \beta_j^- \env_{ij}^1  + h_i^+\beta_j^+ \env_{ij}^2 + h_i^+\beta_j^- \env_{ij}^3 + h_i^-\beta_j^+ \env_{ij}^4\right) \notag\\
        \st\ & Ay \geq b,\\
        & By + u_\beta^+ + u_\beta^- -h_i^-\env_{i}^1 - h_i^+\env_{i}^2 - h_i^+ \env_{i}^3 -h_i^-\env_{i}^4 \geq B_x x + b_0 && \text{for every } i\in I\\
        & u_\beta^+ \geq 0,\ -u_\beta^- \geq 0,\\
        & (A_\Omega u_\Omega)_i - \sum_{j\in J} \left( \beta_j^-\env_{ij}^1 + \beta_j^+\env_{ij}^2 + \beta_j^-\env_{ij}^3+\beta_j^+\env_{ij}^4\right) \geq 0 && \text{for every } i\in I,\\
        & B_\Omega u_\Omega \geq 0,\\
        & \env_{ij}^1 + \env_{ij}^2 + \env_{ij}^3 + \env_{ij}^4 \geq (B_h)_{ij} && \text{for every } i\in I, j\in J,\\
        & -\env_{ij}^1, -\env_{ij}^2 \geq 0 && \text{for every } i\in I, j\in J,\\
        & \env_{ij}^3, \env_{ij}^4 \geq 0 && \text{for every } i\in I, j\in J,\\
        & x\in \Xcal.
    \end{align}
\end{subequations}
\end{theorem}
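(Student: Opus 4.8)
The strategy is to build up the single-level LP in four successive transformations of the trilevel problem \eqref{Prob: entire_adjustable_RO}, each of which either preserves the value or relaxes it upward, so that the final LP is an upper bound. First I would fix the integer part of $y$ via the constraints $A_f y \geq b_f$ (which by Assumption~\ref{Ass: bounds_on_h_and_beta} and weak connectedness is innocuous for the approximation we are after) and pass from the primal third-level LP \eqref{Prob: third_level_primal_general} to its dual \eqref{Prob: third_level_dual_general}; by LP strong duality the inner $\min$ becomes a $\max$ over $(\alpha,\beta)$, so the problem reads $\min_{x\in\Xcal} G(x) + \max_{h\in\Omega}\max_{(\alpha,\beta)} \big(b^\top\alpha + (B_x x + B_h h + b_0)^\top\beta\big)$, with the feasibility block $(A^\top\ B^\top)(\alpha;\beta)=c$, $\alpha,\beta\geq 0$. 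The two $\max$ operators now merge into a single joint maximization over $(h,\alpha,\beta)\in\Omega\times\{\text{dual feasible}\}$.

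Second, I would isolate the only nonconvexity, the bilinear coupling $h^\top B_h\beta = \sum_{i\in I,j\in J}(B_h)_{ij}h_i\beta_j$, and replace each product $h_i\beta_j$ by a McCormick over-/under-estimator using the box bounds $h_i^-\leq h_i\leq h_i^+$ and $\beta_j^-\leq\beta_j\leq\beta_j^+$ from Assumption~\ref{Ass: bounds_on_h_and_beta}. Since we are maximizing the inner problem, replacing $h_i\beta_j$ by its convex-hull relaxation (the four McCormick inequalities) can only increase the inner value, hence increase the overall $\min$; this is exactly where the ``potentially overly pessimistic'' loss is incurred, and it is the only place where exactness can fail. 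After this step the inner problem is a \emph{linear} program in $(h,\alpha,\beta)$ together with the auxiliary product variables; substituting the $\Omega$-description $A_\Omega^\top h + B_\Omega^\top\eta = b_\Omega$, $\eta\geq 0$ and the bound constraints as explicit rows gives an LP in standard form whose data are the matrices $A,B,B_x,B_h,A_\Omega,B_\Omega$ and the vectors $b,b_0,b_\Omega$ and the bound vectors.

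Third, I would take the LP dual of this inner maximization. The dual variables are precisely the quantities named in the theorem: $u_\beta^+,u_\beta^-$ dual to the $\beta$-bound rows, $u_\Omega$ dual to the $\Omega$-equality rows, and the envelope multipliers $\env_{ij}^1,\dots,\env_{ij}^4$ (and their aggregated forms $\env_i^1,\dots,\env_i^4$) dual to the four families of McCormick inequalities; the sign conditions ($u_\beta^+\geq 0$, $-u_\beta^-\geq 0$, $-\env^1,-\env^2\geq 0$, $\env^3,\env^4\geq 0$) come from the sense of the corresponding primal inequalities, and the equality-type constraints (the $By+\dots\geq B_x x + b_0$ block, the $(A_\Omega u_\Omega)_i-\dots\geq 0$ block, the $B_\Omega u_\Omega\geq 0$ block, and the envelope-defining constraint $\env_{ij}^1+\env_{ij}^2+\env_{ij}^3+\env_{ij}^4\geq (B_h)_{ij}$) are read off column-by-column from the transpose of the inner-LP constraint matrix; the objective $(\beta^+)^\top u_\beta^+ + (\beta^-)^\top u_\beta^- + b_\Omega^\top u_\Omega - \sum_{ij}(h_i^-\beta_j^-\env_{ij}^1+\dots)$ is the dual objective, i.e. the original right-hand sides paired with the new multipliers. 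Weak LP duality then gives that the dual \emph{minimum} is $\geq$ the inner maximum, and since we are already minimizing over $x$ on the outside, the whole program \eqref{Prob: McCormick_relaxation_three_level_program} is a single minimization whose value is $\geq$ the McCormick-relaxed trilevel value $\geq$ the value of \eqref{Prob: entire_adjustable_RO}. Re-attaching $x\in\Xcal$ and the retained primal rows $Ay\geq b$, $c^\top y$ in the objective (so that the fixed-integer LP value is carried along correctly; ultimately the integrality of $y$ is what one reintroduces to make it a MIP, but for the LP bound claimed here $y$ stays continuous) completes the construction.

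The main obstacle is bookkeeping discipline rather than any deep idea: one must set up the inner maximization LP in a single fixed standard form — deciding once and for all which constraints are $\geq$, which are $=$, which variables are sign-constrained — and then mechanically transpose, because a single mismatched sign or a transposed-versus-not block will make the stated dual \eqref{Prob: McCormick_relaxation_three_level_program} fail to match. In particular the four envelope multiplier families must be introduced so that, after dualization, the rows involving $h_i^\pm$ aggregate correctly into the $\env_i^k$ appearing in the $By$-block while the ones involving $\beta_j^\pm$ aggregate into the $\env_{ij}^k$ appearing in the $A_\Omega u_\Omega$-block and in the objective; verifying that these two aggregations are consistent with a single underlying set of McCormick inequalities is the one spot that needs genuine care. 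Everything else — strong duality at step one, weak duality at step three, monotonicity of the McCormick relaxation under maximization at step two — is standard, and chaining the three inequalities ($\eqref{Prob: entire_adjustable_RO}\leq$ McCormick-relaxed value $\leq$ \eqref{Prob: McCormick_relaxation_three_level_program}) yields the claim.
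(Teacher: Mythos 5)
Your proposal follows essentially the same route as the paper's proof: dualize the inner (third-level) LP with the integers fixed, merge the two maximizations over $h$ and $(\alpha,\beta)$, relax the bilinear terms $h_i\beta_j$ via McCormick envelopes using the bounds from Assumption~\ref{Ass: bounds_on_h_and_beta}, and dualize the resulting joint LP to obtain \eqref{Prob: McCormick_relaxation_three_level_program}, with the named multipliers $u_\beta^\pm, u_\Omega, \env^{1,\dots,4}$ playing exactly the roles you describe. The only cosmetic differences are that the paper normalizes $h^-=0$ to keep $\kappa\geq 0$ and invokes strong duality at the final step where you correctly note weak duality already suffices for the upper-bound direction.
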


\begin{proof}
    We observe that with Assumption \ref{Ass: bounds_on_h_and_beta} the second-level $\max_{h\in \Omega} \min \{c^\top y: Ay\geq b, By \geq B_x x + B_h h + b_0\}$ can be written as
\begin{subequations}
    \begin{align}
        \max\ & b^\top \alpha + (B_x x + b_0)^\top \beta + \sum_{i\in I, j\in J} (B_h)_{ij} h_i\beta_j \label{Obj: second-level_with_beta_bounds}\\
        \st\ & A^\top \alpha + B^\top \beta = c,\\
        & \beta + \gamma = \beta^+,\\
        & \beta - \delta = \beta^-,\\
        & A_\Omega^\top h + B_\Omega^\top \eta = b_\Omega,\\
        & \alpha,\beta,\gamma,\delta,h,\eta \geq 0,
    \end{align}
\end{subequations}
where we assumed w.l.o.g. that $h^-= 0$ since otherwise, we could substitute $h$ by $h-h^-$ and adjust $b_0$ and $b_\Omega$ accordingly.
Next, we substitute $\kappa_{ij}\coloneqq h_i\beta_j$ in the objective and relax the resulting constraint $\kappa_{ij}= h_i\beta_j$ by a McCormick envelope. Note, that w.l.o.g. $\kappa\geq 0$. If we further introduce suitable nonnegative slack variables $\rho\geq 0$, we obtain the following LP: 
\begin{subequations}\label{Prob: second_level_primal_relaxed_general}
    \begin{align}
        \max\ & b^\top \alpha + (B_x x + b_0)^\top \beta + \sum_{i\in I, j\in J} (B_h)_{ij} \kappa_{ij}\\
        \st\ & A^\top \alpha + B^\top \beta = c,\\
        & \beta + \gamma = \beta^+,\\
        & \beta - \delta = \beta^-,\\
        & A_\Omega^\top h + B_\Omega^\top \eta = b_\Omega,\\
        & \kappa_{ij} = h_i^-\beta_j + h_i\beta_j^- - h_i^-\beta_j^- +\rho_{ij}^1 && \text{for every } i\in I, j\in J \\
        & \kappa_{ij} = h_i^+\beta_j + h_i\beta_j^+ - h_i^+\beta_j^+ +\rho_{ij}^2 && \text{for every } i\in I, j\in J,\\
        & \kappa_{ij} = h_i^+\beta_j + h_i\beta_j^- - h_i^+\beta_j^- -\rho_{ij}^3 && \text{for every } i\in I, j\in J,\\
         & \kappa_{ij} =  h_i\beta_j^+ + h_i^-\beta_j - h_i^-\beta_j^+ -\rho_{ij}^4 && \text{for every } i\in I, j\in J,\\
        & \alpha,\beta,\gamma, \delta,h,\eta,\kappa,\rho \geq 0.
    \end{align}
\end{subequations}

If we denote the dual variables of \eqref{Prob: second_level_primal_relaxed_general} by $y, u_\beta^+, u_\beta^-, u_\Omega, \env_{ij}^1,\env_{ij}^2,\env_{ij}^3,\env_{ij}^4$ respectively, then the result follows by strong duality and including the first-level variables and objectives.
\end{proof}

We observe, that Theorem \ref{Thm: linear_single_level_approximation_general} provides an LP inner approximation of \eqref{Prob: entire_adjustable_RO} in the sense that the adversarial is overestimated, whereas the space of decision variables is underestimated, potentially leading to a more conservative solution. In particular, we simply fixed the discrete or w.l.o.g. binary decisions in $\Ycal(x,h)$ denoted by $y_{m+1},\ldots y_{m+l}\in \{0,1\}$. To this end, we briefly illustrate the key idea behind the following Theorem \ref{Thm: single_level_MIP_general}, that addresses integralities:

Observe, that solving \eqref{Prob: entire_adjustable_RO} is equivalent to solving exponentially many trilevel LPs -- one LP for every fixed choice of $y_{m+1},\ldots y_{m+l}$. This is due to the fact that the proof of Theorem \ref{Thm: linear_single_level_approximation_general} does not depend on the fixing of $(y_{m+1},\ldots , y_{m+l})^\top$ to a vector in $y'\in \{0,1\}^l$. Consequently, we obtain $2^l$ single-level LPs, where each of these LPs yields an inner-approximation of the trilevel LPs given by the fixing of $y_{m+1},\ldots , y_{m+l}$. Finally, instead of solving each of those exponentially many LPs separately, the following MIP gives the same result.

\begin{theorem}\label{Thm: single_level_MIP_general}
    Let the ambiguity set $\Omega$ be a polytope defined by $\Omega=\{h\in \R^I: A_\Omega^\top h +B_\Omega^\top \eta =b_\Omega,\ \eta \geq 0\}$ with $b_\Omega\in \R^k$, i.e. $\Omega\subseteq \R^I$ is compact and $\eta\geq 0$ denote potential nonnegative slack variables in the rows. Let further $\beta^-,\beta^+$ be a lower/upper bound for $\beta$. Then, the following MIP provides an upper bound to the tri-level MIP \eqref{Prob: entire_adjustable_RO}:
    \begin{subequations}\label{Prob: single_level_MIP_general}
    \begin{align}
        \min\ & G(x) + (\beta^+)^\top u_\beta^+ + (\beta^-)^\top u_\beta^- + b_\Omega^\top u_\Omega  + c^\top y \label{Obj: single_level_MIP_general}\\
        & - \sum_{i,j\in [m]} \left(h_i^- \beta_j^- \env_{ij}^1  + h_i^+\beta_j^+ \env_{ij}^2 + h_i^+\beta_j^- \env_{ij}^3 + h_i^-\beta_j^+ \env_{ij}^4\right)\notag\\
        \st\ & Ay \geq b, \label{Constr: single_level_MIP_general_C1}\\
        & By + u_\beta^+ + u_\beta^- -h_i^-\env_{i}^1 - h_i^+\env_{i}^2 - h_i^+ \env_{i}^3 -h_i^-\env_{i}^4 \geq B_x x + b_0 && \text{for every } i\in [m]\\
        & u_\beta^+ \geq 0,\ -u_\beta^- \geq 0,\\
        & (A_\Omega u_\Omega)_i - \sum_{j\in [m]} \left( \beta_j^-\env_{ij}^1 + \beta_j^+\env_{ij}^2 + \beta_j^-\env_{ij}^3 + \beta_j^+\env_{ij}^4\right) \geq 0 && \text{for every } i\in [m],\\
        & B_\Omega u_\Omega \geq 0,\\
        & \env_{ij}^1 + \env_{ij}^2 + \env_{ij}^3 + \env_{ij}^4 \geq (B_h)_{ij} && \text{for every } i,j\in [m],\\
        & -\env_{ij}^1, -\env_{ij}^2 \geq 0 && \text{for every } i,j\in [m],\\
        & \env_{ij}^3, \env_{ij}^4 \geq 0 && \text{for every } i,j\in [m],\\
        & x\in \Xcal, u_\beta^+, u_\beta^- \in \R^J, u_\Omega\in \R^k, \env^1, \env^2, \env^3, \env^4 \in \R^{I\times J}, y\in \R^{n+l},\label{Constr: single_level_MIP_general_env2_lb}\\
        & y_{m+1},\ldots , y_{m+l}\in \{0,1\}. \label{Constr: IP_constraint_general}
    \end{align}
\end{subequations}
\end{theorem}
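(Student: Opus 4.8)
The plan is to reduce Theorem~\ref{Thm: single_level_MIP_general} to Theorem~\ref{Thm: linear_single_level_approximation_general}, using the observation recorded just above the statement: the proof of Theorem~\ref{Thm: linear_single_level_approximation_general} never uses the concrete values to which the binary block $(y_{m+1},\dots,y_{m+l})$ is fixed, and this fixing enters the LP \eqref{Prob: McCormick_relaxation_three_level_program} only through the row block coming from $A_f y\ge b_f$, i.e.\ through the constraint written there as $Ay\ge b$. So I would first fix, for each $\bar y'\in\{0,1\}^l$, the binaries via the two-sided bounds $y_{m+i}\ge \bar y'_i$ and $-y_{m+i}\ge -\bar y'_i$ (this is the block $A_f^{(\bar y')}y\ge b_f^{(\bar y')}$), apply Theorem~\ref{Thm: linear_single_level_approximation_general} with $A$ and $b$ stacking $A',b'$ on top of $A_f^{(\bar y')},b_f^{(\bar y')}$, and call $v(\bar y')$ the optimal value of the resulting instance of \eqref{Prob: McCormick_relaxation_three_level_program}. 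By Theorem~\ref{Thm: linear_single_level_approximation_general}, $v(\bar y')$ is an upper bound for \eqref{Prob: entire_adjustable_RO}: it upper-bounds the trilevel problem obtained from \eqref{Prob: entire_adjustable_RO} by additionally imposing $y_{m+i}=\bar y'_i$, and that restricted trilevel problem in turn dominates \eqref{Prob: entire_adjustable_RO}, since in \eqref{Prob: entire_adjustable_RO} the wait-and-see player optimizes over all of $\Ycal(x,h)$, a superset of $\{y\colon y_{m+i}=\bar y'_i\}\cap\Ycal(x,h)$. Hence $\min_{\bar y'\in\{0,1\}^l}v(\bar y')$ is still an upper bound for \eqref{Prob: entire_adjustable_RO}.

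The second and main step is to recognize $\min_{\bar y'}v(\bar y')$ as the optimal value of the MIP \eqref{Prob: single_level_MIP_general}. I would partition the feasible set of \eqref{Prob: single_level_MIP_general} by the value $\bar y'\in\{0,1\}^l$ of $(y_{m+1},\dots,y_{m+l})$; by \eqref{Constr: IP_constraint_general} this is a finite, exhaustive partition, so the MIP optimum equals the minimum over $\bar y'$ of the minimum of \eqref{Obj: single_level_MIP_general} over the corresponding piece. On the piece indexed by $\bar y'$, substituting $y_{m+i}=\bar y'_i$ turns \eqref{Constr: single_level_MIP_general_C1} together with \eqref{Constr: IP_constraint_general} into precisely the system obtained by stacking $A'y\ge b'$ on top of $A_f^{(\bar y')}y\ge b_f^{(\bar y')}$, while all remaining ingredients---the $B$-rows, the sign conditions on $u_\beta^{\pm}$, the $A_\Omega$- and $B_\Omega$-rows, the McCormick rows in $\env^{1},\dots,\env^{4}$, the constraint $x\in\Xcal$, and the objective \eqref{Obj: single_level_MIP_general}---are term by term the corresponding lines of \eqref{Prob: McCormick_relaxation_three_level_program}. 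Thus the piece indexed by $\bar y'$ is exactly the LP defining $v(\bar y')$, and the MIP value equals $\min_{\bar y'}v(\bar y')$, which by the first step bounds \eqref{Prob: entire_adjustable_RO} from above. (If every such LP is infeasible, equivalently $\Ycal(x,h)=\emptyset$ for some $h$ at every $x$, both sides are $+\infty$ and there is nothing to prove.)

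The part that needs the most care is exactly this bookkeeping: I have to verify that nothing in \eqref{Prob: single_level_MIP_general} other than the interplay between the linear rows $Ay\ge b$ and the integrality constraint \eqref{Constr: IP_constraint_general} varies with $\bar y'$, so that the disjunction over $\bar y'$ of the $2^l$ fixed LPs \eqref{Prob: McCormick_relaxation_three_level_program} is reassembled \emph{without loss} as the single MIP---in particular that ``$A'y\ge b'$ together with $y_{m+1},\dots,y_{m+l}\in\{0,1\}$'' is exactly the union over $\bar y'$ of the fixed systems. The direction of the inequality then costs nothing extra: it is inherited from Theorem~\ref{Thm: linear_single_level_approximation_general}, where it stems from the McCormick outer relaxation of the adversary's bilinear objective, together with the monotonicity observation above that pre-committing to any integer realization of the adjustable variables can only raise the robust optimum.
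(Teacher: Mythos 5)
Your proof is correct and follows essentially the same route as the paper: decompose the MIP by the $2^l$ binary assignments, apply Theorem~\ref{Thm: linear_single_level_approximation_general} to each resulting LP, and observe that each fixed-integer trilevel value dominates \eqref{Prob: entire_adjustable_RO}. The only cosmetic difference is that you justify the final inequality by monotonicity of the inner minimum under restriction of $\Ycal(x,h)$, where the paper invokes the max--min inequality (and then additionally argues, via a Lagrangian and Ky--Fan argument, that this exchange is in fact tight --- a refinement not needed for the upper-bound claim you were asked to prove).
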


\begin{proof}
   First, we fix $(y_{m+1},\ldots , y_{m+l})^\top =y'$ with a set of linear (in-)equalities $A_fy \geq b_f$. Here, $y'\in \{0,1\}^l$ denotes an arbitrary integer assignment. Suppose we replace \eqref{Constr: IP_constraint_general} by this system of inequalities and denote the resulting feasible set by $\Xcal(y')$, i.e.
    $$\Xcal(y')=\left\{x, u_\beta^+, u_\beta^-, u_\Omega, \env^1, \env^2, \env^3, \env^4, y : \eqref{Constr: single_level_MIP_general_C1} - \eqref{Constr: single_level_MIP_general_env2_lb}, (y_{m+1},\ldots , y_{m+l})^\top = y'\right\}.$$
    
    We further denote the corresponding objective formed by \eqref{Obj: single_level_MIP_general} by $c(x)$, i.e. the LP $\min_{x\in \Xcal(y')} c(x)$ describes \eqref{Prob: single_level_MIP_general} with fixed values of $(y_{m+1},\ldots , y_{m+l})^\top$. 
    Hence, Theorem \ref{Thm: linear_single_level_approximation_general} implies, that $\min_{x\in \Xcal(y')} c(x)$ inner-approximates the following trilevel LP 
    $$\min_{x\in \Xcal} G(x) + \max_{h\in \Omega} \min_{y\in \Ycal(x,h,y')} c^\top y,$$
    where $$\Ycal(x,h,y')=\left\{ y\in \Ycal(x,h): (y_{m+1},\ldots , y_{m+l})^\top=y'\right\}.$$
    In particular, we obtain by Theorem \ref{Thm: linear_single_level_approximation_general} that $\min_{x\in \Xcal(y')} c(x) \geq \min_{x\in \Xcal} G(x) + \max_{h\in \Omega} \min_{y\in \Ycal(x,h,y')} c^\top y$ for every $y'\in \{0,1\}^l$. 
    Subsequently, we conclude
    \begin{align*}
        \eqref{Prob: single_level_MIP_general} & = \min_{y'\in \{0,1\}^l} \min_{x\in \Xcal(y')} c(x) \\
        & \geq \min_{y'\in \{0,1\}^l} \min_{x\in \Xcal} G(x) + \max_{h\in \Omega} \min_{y\in \Ycal(x,h,y')} c^\top y\\
        & = \min_{x\in \Xcal} G(x) + \min_{y'\in \{0,1\}^l}\max_{h\in \Omega} \min_{y\in \Ycal(x,h,y')} c^\top y \\
        & \overset{(*)}{\geq} \min_{x\in \Xcal} G(x) + \max_{h\in \Omega} \min_{y' \in \{0,1\}^l,y\in \Ycal(x,h,y')} c^\top y = \eqref{Prob: entire_adjustable_DRO}. 
    \end{align*}
    As stated above, the first inequality is based on Theorem \ref{Thm: linear_single_level_approximation_general} whereas the second one is a consequence of the max-min-inequality. In the remainder of this proof, we will argue, that $(*)$ is even sharp and thus, any potential differences between the relaxation \eqref{Prob: single_level_MIP_general} and the original problem \eqref{Prob: entire_adjustable_DRO} solely originate from the McCormick envelopes. To this end,
    let us consider the Lagrangian relaxation with penalty terms instead of hard constraints:
    $$f_x(h,y')\coloneqq \min_{y\in \Ycal(x)} c^\top y + u_0^\top(A_\Omega^\top h +B_\Omega^\top \eta -b_\Omega) + u_1^\top((y_{m+1},\ldots , y_{m+l})^\top-y'),$$
    where $\Ycal(x)\supseteq \Ycal(x,h)$ is defined as the relaxation of $\Ycal(x,h)$ occuring if Constraint $A_\Omega^\top h +B_\Omega^\top \eta = b_\Omega$ is dropped. Observe that since $\Ycal(x)$ is bounded, for a given $h\in \Omega$ and $y'\in \{0,1\}^l$, according to Theorem 21 in \cite{Lemarchal2001a}, there exist $u_0\in \R^k,u_1\in \R^l$ that satisfy
    $$\min_{y\in \Ycal(x,h,y')} c^\top y =  f_x(h,y').$$
    Then, since $f_x(h,y')$ is concave in $h$, i.e. on the nonempty set $\Omega$ and further is convex in $y'$ on the compact nonempty set $\{0,1\}^l$, we apply the Ky-Fan theorem \cite{Fan1953a} and conclude equality in $(*)$. 
\end{proof}

We note, that the approximation quality given by Theorem \ref{Thm: single_level_MIP_general} relies solely on the strenght of the relaxation due to the McCormick envelopes. Moreover, as the number of nonzero entries in $B_h$ determines the number of bilinear terms in the second-level, it crucially affects the approximation quality given by the McCormick relaxation. In particular, a large number of bilinear terms may lead to an overly conservative solution. Thus, exploiting problem specific information that improves the bounds $\beta^-,\beta^+,h^-,h^+$ directly increases the solution quality. Moreover, more elaborate approximations of $h^\top B_h \beta$ may lead to a direct improvement of Theorem \ref{Thm: single_level_MIP_general} and are subject to future research. 
\section{Application to smart converters in power systems networks}\label{Sec: powerflow_theory}
In the present section, we apply our results to the questions of how to operate a power grid integrated with smart inverters and storage.

Usually, the optimal operation problem of the power system can be divided into two phases.
\begin{itemize}
    \item The first phase is about day-ahead scheduling. In this phase, the power demand of the power grid needs to be determined and reported to the day-ahead electricity market. In addition, internal conventional generators need to make a power generation plan for the next day to reasonably allocate diesel consumption.
    \item The second phase is about intra-day system operation. Given the day-ahead decisions, operators need to further set operating points and working status of smart inverters and energy storage systems to realize a real-time power balance and reduce the waste of renewables.
\end{itemize}

As can be seen from our analysis, the day-ahead decisions influence the intra-day decisions. The renewable-led uncertainties will further affect the quality of decision-making. In order to realize the safe and economical operation of the power grid, we need to overcome the following three difficulties:
\begin{enumerate}
    \item \textbf{Sequential}: The formulation of the day-ahead strategy is subject to uncertainty realizations and intra-day operations. A multi-layer model is needed to describe the decision sequence as well as uncertainty realizations in the real world.
    
    \item \textbf{Uncertain}: The uncertainties threaten the safe operation of power systems. Power fluctuations induced by renewables may deteriorate power quality and increase electricity costs. Hence, the operation strategy should be robust enough to handle different situations. 

    \item \textbf{Discrete}: The intra-day operation in the second phase often involves many state-switching operations. The states can be modeled as integer variables in the optimization problem. However, these variables destroy the convexity of the model. A new solution method should be developed in such a case.
    
\end{enumerate}

The power system operation problem can be solved using the robust optimization approach with MIP adjustments proposed in Section 2. A multi-layer model is built to describe the sequential decision-making process; the adaptive robust programming is employed to account for worst-case scenarios; the MIP adjustment is then applied to consider the state switching in the system.

At first, we formulate the mathematical model of the power system operation. We mainly follow the notation by
\cite{Yang2019a} but use the simpler DC approximation of Kirchhoff laws in order to model the power flow when operating the grid. 

Let $\Bcal$ denote a set of buses and $\Lcal$ denote a set of lines/branches in a power grid. Additionally, power is generated within the grid either by a set of conventional (fossil fuel) generators denoted by $\Ncal_G$ or by a set of distributed (renewable) generators denoted by $\Ncal_{DG}$. Moreover, the transmission system operator (TSO) may decide to store or release power via a set $\Ncal_S$ of storages, e.g. batteries. The last potential sources and sinks of power is a trading node with other connected regional transmission grids. Here, the TSO may purchase power on the day-ahead market or intra-day. As the day-ahead market is often called first-level market, we denote the amount of power traded day-ahead by $P_{fl}$ and the corresponding market price by $p_{fl}$. Similarly, the amount of power traded intra-day is denoted by $P_{sl}$ and its market price by $p_{sl}$. Note, that a positive value for $P_{fl},P_{sl}$ is interpreted as a purchase and negative values for $P_{fl},P_{sl}$ correspond to a sell of energy.

For the SO's initial planning, one considers the first-level variables $x=(P_G^\top,P_{fl})^\top\in \R^{\Ncal_G}\times \R$ combined with the estimated renewable energy production $P_{DG, forecast}\in \R^{\Ncal_{DG}}$ and ensures that a given total demand $\sum_{i\in \Bcal} P_{d_i}$ is met. Hence, 
$$\Xcal=\{x\in \R^n:\ \eqref{Constr: fl1_define_Pfl}\ \&\ \eqref{Constr: fl2_bounds_P_G} \},$$
where
\begin{subequations}
    \begin{align}
    & P_{fl}^t = \sum_{i\in \Bcal} P_{d_i}^t - \sum_{i\in \Ncal_G}P_{G_i}^t - \sum_{i\in \Ncal_{DG}} P_{DG_i, forecast}^t & \text{for every } t\in T, \label{Constr: fl1_define_Pfl}\\
    & P_{G_i,\text{min}}^t \leq P_{G_i}^t \leq P_{G_i,\text{max}}^t & \text{for every } i\in \Ncal_G, t\in T.\label{Constr: fl2_bounds_P_G}
\end{align}
\end{subequations}
with given parameters $P_{G,\min}, P_{G,\max}\in \R^{\Ncal_G \times T}$ and the subsequent constraints \eqref{Constr: fl1_define_Pfl} and \eqref{Constr: fl2_bounds_P_G}. We like emphasize that the \emph{market-clearing condition} \eqref{Constr: fl1_define_Pfl} ensures the active power balance in the whole system. Moreover, the objective of the first-level is given by 
$$G(x)=\sum_{i\in \Ncal_G,t\in T} c_{G_i^t,2} (P_{G_i}^t)^2 + c_{G_i^t,1} P_{G_i}^t + c_{G_i^t,0} + \sum_{t\in T} p_{fl}^t P_{fl}^t,$$
where $c_{G_i^t,2},c_{G_i^t,1},c_{G_i^t,0}\in \R$ are given generator cost parameters.

Since the uncertainties will impact the initial planning, we will consider the uncertain capacity of the renewable generators, i.e. we denote the second-level variable by $h=P_{DG, \text{max}}$. As these generators are dependent on weather conditions, which are highly uncertain, this is one of the most common uncertainties faced by modern power grids with high proportion of renewable energies \cite{pfenninger2014energy, impram2020challenges}. As the grid stability is crucial, we will address this uncertainty in a robust manner and set the domain of the second-level variables $P_{DG,\max}$ to

\begin{equation}\label{Eq: Def_Omega}
    \Omega=\left\{P_{DG,\text{max}}\in \R^{\Ncal_{DG}\times T}:\ 0\leq  P_{DG_i, \text{max}}^t \leq P_i^+ \forall\ i\in \Ncal_{DG}, t\in T, \sum_{i\in \Ncal_{DG}} P_{DG_i, \text{max}}^t \geq R\sum_{i\in \Ncal_{DG}} P_{DG_i, forecast}^t \right\},
\end{equation}
where $P_i^+$ denotes the technical limit of the renewable generator, i.e. its capacity under optimal conditions and $R\in [0,1]$ denotes a maximal forecast error. 

On the third level, the TSO is able to react to this uncertainty and adjust the initial planning accordingly. In particular, instead of producing $P_G$, the TSO might regulate the energy output to $P_{G,\text{reg}}$ by either increasing the production by adding $P_G^+\geq 0$ or decreasing the production by adding $P_G^-\leq 0$. However, this can only be done at a cost $r^+$ or $r^-$ respectively. Similarly, $P_{DG}$ is the adjusted energy production that deviates from its forecast by $P_{DG}^+$ or $P_{DG}^-$, where deviations are penalized by $f^+,f^-$ respectively. Despite of these regulations, the TSO might trade power intra-day ($P_{sl}$) or decide whether ($\mu_{ch},\mu_{dch}\in \{0,1\}^{\Ncal_S}$) and by how much $P_{ch},P_{dch}$ to charge or discharge the storages for balancing potential power deficiency and surplus. The state of charge of a storage is denoted by $\soc$. Lastly, the power on a line $(k,l)\in \Lcal$ is denoted by $p_{kl}$ and the phase angles of the system by $\vartheta$. Hence, the TSO can adjust the vector $y=(P_{G_i,\text{reg}}, P_{G_i}^+, P_{G_i}^-, P_{DG_i}, P_{DG_i}^+, P_{DG_i}^-, P_{sl}, P_{ch_i}, P_{dch_i},p_{kl}, \theta, \soc, \mu_{ch}, \mu_{dch} )^\top$ in order to satisfy
$$y\in \Ycal(x,h)\coloneqq \left\{y\in \R^m:\ \eqref{Constr: adjust_fossil_fuel_generators}-\eqref{Constr: mu_bound}\right\},$$
where the constraints \eqref{Constr: adjust_fossil_fuel_generators}-\eqref{Constr: mu_bound} are given below:

\begin{enumerate}
    \item First, we consider the following \emph{Generator and DG output constraints}:
    \begin{subequations}
        \begin{align}
            & P_{G_i,\text{reg}}^t = P_{G_i}^t + P_{G_i}^{t,+}+ P_{G_i}^{t,-} & \text{for every } i\in \Ncal_G, t\in T, \label{Constr: adjust_fossil_fuel_generators}\\
            & P_{G_i,\text{min}}^t \leq P_{G_i,\text{reg}}^t \leq P_{G_i,\text{max}}^t & \text{for every } i\in \Ncal_G, t\in T,\\
            & P_{DG_i}^t = P_{DG_i,\text{forecast}}^t + P_{DG_i}^{t,+}+P_{DG_i}^{t,-} & \text{for every } i\in \Ncal_{DG}, t\in T, \label{Constr: adjust_distributed_generators}\\
            & P_{DG_i,\text{min}}^t \leq P_{DG_i}^t \leq P_{DG_i, \text{max}}^t & \text{for every } i\in \Ncal_{DG}, t\in T, \label{Constr: P_DG_leq_P_DG_max}\\
            & P_{sl}^t-P_{fl}^t=-\mathbbm{1}^\top (P_{G}^{t,+} +P_G^{t,-}) -\mathbbm{1}^\top (P_{DG}^{t,+}+P_{DG}^{t,-}) -\mathbbm{1}^\top (P_{dch}^t-P_{ch}^t) & \text{for every } t\in T.\label{Constr: second_level_market_clearing}
        \end{align}
    \end{subequations}
    Constraints \eqref{Constr: adjust_fossil_fuel_generators}-\eqref{Constr: P_DG_leq_P_DG_max} describe the output range of the conventional and renewable generators and the potential impact of uncertainties. In particular, \eqref{Constr: P_DG_leq_P_DG_max} shows that the third-level variables $P_{DG_i}^t$ are restricted by the uncertainties realized in the second level ($P_{DG_i, \text{max}}^{t}$). Constraint \eqref{Constr: second_level_market_clearing} illustrates the market clearing on the intra-day market, which reflect the actual power demand-supply relations.
    
    \item Second, we consider the \emph{operation constraints}:
    \begin{subequations}
        \begin{align}
            & 0 \leq P_{G_i}^{t,+} \leq P_{G_i, \text{max}}^{t,+} & \text{for every } i\in \Ncal_G, t\in T, \label{Constr: pg1}\\
            & P_{G_i, \text{min}}^{t,-} \leq P_{G_i}^{t,-} \leq 0 & \text{for every } i\in \Ncal_G, t\in T,\\
            & P_{DG_i}^{t,+}\geq 0 & \text{for every } i\in \Ncal_{DG}, t\in T,\\
            & P_{DG_i}^{t,-} \leq 0 & \text{for every } i\in \Ncal_{DG}, t\in T. \label{Constr: pg4}
        \end{align}
    \end{subequations} 
    Constraints \eqref{Constr: pg1} -\eqref{Constr: pg4} limit the real output derivations of conventional and renewable generators.    
    \item Third, we consider the \emph{power flow constraints} with a DC approximation for the given, constant line reactance ($x_{ij}>0$) and demand in active power ($P_k^{d,t}$) at every time step $t$ and bus $k$. The nodal power flow balance is established in \eqref{Constr: power_flow1} and separately in \eqref{Constr: power_flow2} for the root node. The branch power flow is established in \eqref{Constr:branch_power_balance}. The constraints are listed as follows:
    \begin{subequations}
    \begin{align}
        & \sum_{i\in \Ncal_G: i\sim k} P_{G_i,\text{reg}}^t +\sum_{i\in \Ncal_{DG}: i\sim k} P_{DG_i}^t + \sum_{i\in \Ncal_S: i\sim k} (P_{dch_i}^t-P_{ch_i}^t) - P_k^{d,t} = \sum_{l \in \delta(k)} p_{kl}^t && \forall k\in \Bcal\setminus\{0\}, t\in T, \label{Constr: power_flow1}\\
        & \sum_{i\in \Ncal_G: i\sim 0} P_{G_i,\text{reg}}^t +\sum_{i\in \Ncal_{DG}: i\sim 0} P_{DG_i}^t + \sum_{i\in \Ncal_S: i\sim 0} (P_{dch_i}^t-P_{ch_i}^t) +P_{sl}^t - P_0^{d,t} = \sum_{l \in \delta(k)} p_{0l}^t && \forall t\in T, \label{Constr: power_flow2}\\
        & p_{ij}^t = \frac{1}{x_{ij}} (\theta_i^t-\theta_j^t) && \forall \{i,j\}\in \Lcal, t\in T \label{Constr:branch_power_balance}
    \end{align}
    \end{subequations}
    \item Fourth, to guarantee the safe operation of the branch, the power flow should not exceed the branch's capacities. Hence, we consider the \emph{branch thermal constraints}:
    \begin{subequations}
        \begin{align}
            & -s_{ij, \max} \leq p_{ij}^t \leq s_{ij, \max} & \text{for every } \{i,j\}\in \Lcal, t\in T.
        \end{align}
    \end{subequations}
    \item Fifth, we consider the \emph{storage constraints}. Noticing that the storage operation involves two actions, the action shifts need to be considered. Two binary variables $\mu_{ch}, \mu_{dch}$ are defined to represent the storage action, where $\mu_{ch}, \mu_{dch}\in \{0,1\}^{\Ncal_S\times T}$. Moreover, only one of those variables can be 1 at time $t$. Thus, we have the following constraints:
    \begin{subequations}
        \begin{align}
        & \soc_{i,\text{min}}^t \leq \soc_i^t \leq \soc_{i,\text{max}}^t & \text{for every } i\in \Ncal_S, t\in T, \label{Constr:soc1}\\
        & \soc_i^t = \soc_i^{t-1} + \frac{(P_{ch_i}^t-P_{dch_i}^t)}{E_i} \Delta T & \text{for every } i\in \Ncal_S, t\in T,\label{Constr:soc2}\\
        & P_{ch_i}^t, P_{dch_i}^t \geq 0 & \text{for every } i\in \Ncal_S, t\in T, \label{Constr: P_ch_P_dch_nonneg}\\
        & \mu_{ch_i}^t, \mu_{dch_i}^t \in \{0,1\} & \text{for every } i\in \Ncal_S, t\in T,\\
        & \mu_{ch_i}^t P_{ch_i,\text{min}}^t \leq P_{ch_i}^t \leq \mu_{ch_i}^t P_{ch_i,\text{max}}^t & \text{for every } i\in \Ncal_S, t\in T, \label{Constr: P_charge_bound}\\
        & \mu_{dch_i}^t P_{dch_i,\text{min}}^t \leq P_{dch_i}^t \leq \mu_{dch_i}^t P_{dch_i,\text{max}}^t & \text{for every } i\in \Ncal_S, t\in T,\label{Constr: P_discharge_bound}\\
        & \mu_{ch_i}^t+\mu_{dch_i}^t \leq 1 & \text{for every } i\in \Ncal_S, t\in T.\label{Constr: mu_bound}
        \end{align}
    \end{subequations}
\end{enumerate}
Constraints \eqref{Constr:soc1} and \eqref{Constr:soc2} set the upper/lower bounds of soc and give the relationships between soc and charging/discharging actions. Constraints \eqref{Constr: P_ch_P_dch_nonneg} -- \eqref{Constr: mu_bound} depict the connection between storage actions $\mu_{ch}$ and $\mu_{dch}$ and their corresponding real power output $P_{ch_i}^t,P_{dch_i}^t$.

Lastly, the SO's cost function is given by 
$$c^\top y\coloneqq \sum_{t\in T} \sum_{i\in \mathcal{N}_G}(r_i^{+,t}P_{G_i}^{+,t} +r_i^{-,t}P_{G_i}^{-,t}) + p_{sl}^t(P_{sl}^t-P_{fl}^t) + \sum_{i\in\mathcal{N}_{DG}} (f_i^+ P_{DG_i}^{t,+}+f_i^- P_{DG_i}^{t,-}).$$

The cost function aims to minimize the electricity cost and reduce the deviation between the intra-day system operation strategy and the day-ahead planning. Thus, the whole adjustment can be summarized as solving
$$\min_{y\in \Ycal(x,h)} c^\top y.$$
Following the structure from Section \ref{Sec: problem setting}, we denote by $Ay\geq b$ the constraints, that solely deal with third-level variables, i.e., every constraint of \eqref{Constr: adjust_fossil_fuel_generators}-\eqref{Constr: mu_bound} despite of \eqref{Constr: P_DG_leq_P_DG_max}. We observe, that the only remaining constraint is the upper bound on the second-level (adverserial) variables $h=P_{DG,\max}$ given by \eqref{Constr: P_DG_leq_P_DG_max} implying $B = \begin{pmatrix}
    0 & -I_{DG}
\end{pmatrix}, B_x=0, B_h = -I_{DG,\max}, b_0=0$. Consequently, the third-level program in this particular case reads
\begin{subequations}\label{Prob: third_level_primal}
    \begin{align}
        \min\ & c^\top y \\
        \st\ & Ay \geq b,\\
        & -P_{DG_i}^t \geq - P_{DG_i,\max}^t && \text{ for every } i \in \Ncal_{DG}, t\in T.\label{Constr: P_DG_ub}
    \end{align}
\end{subequations}
In addition, we denote by $a_{DG}$ the columns of $A$ corresponding to $P_{DG}$ and the remaining columns by $A'$, i.e., $A=\begin{bmatrix}A' & a_{DG}\end{bmatrix}$. Let further $\alpha$ denote the dual variables that correspond to Constraints \eqref{Constr: adjust_fossil_fuel_generators} -- \eqref{Constr: P_ch_P_dch_nonneg}, i.e. to $Ay\geq b$ and $\beta \in \R^{\Ncal_{DG}\times T}$ denote the dual variables corresponding to \eqref{Constr: P_DG_ub}. Then the dual program of \eqref{Prob: third_level_primal} is
\begin{subequations}\label{Prob: third_level_dual}
    \begin{align}
        \max\ & b^\top \alpha - \sum_{i\in \Ncal_{DG}, t\in T} P_{DG_i,\max}^t \beta_{DG_i,t} \\
        \st\ & (A')^\top \alpha= c,\\
        & a_{DG_i}^\top \alpha - \beta_{DG_i,t} = 0 && \text{ for every } i\in \Ncal_{DG}, t\in T,\\
        & \alpha,\beta\geq 0,
    \end{align}
\end{subequations}
We observe, that, as in Section \ref{Sec: problem setting}, on the second level the objective decomposes into a bilinear part and a linear one. However, one can argue that the dual variables $\beta_{DG}$ as well as the maximal capacity $P_{DG,\max}$ of the distributed generators are bounded, i.e., Assumption \ref{Ass: bounds_on_h_and_beta} holds. In particular, we will show that there are $P_{i,t}^+,\beta_{i,t}^+$ such that
$$P_{DG_i,\min}^t \leq P_{DG_i,\max}^t\leq P_{i,t}^+ \text{ and } \beta_{i,t}^-\leq \beta_{DG_i,t} \leq \beta_{i,t}^+.$$
On the one hand, this is because distributed generators have technical limits. For instance, the power outputs of wind turbines are restricted by the cut-out wind speed. The outputs will not exceed the power corresponding to this wind speed. As for the solar panels, their outputs are also restricted by the rated power of the devices themselves. Hence, $P_{DG_i,\max}^t$ is always bounded. 

On the other hand, we may prove the existence of an upper bound for $\beta_{DG_i,t}$ and thereby verify Assumption \ref{Ass: bounds_on_h_and_beta} in our application as follows:
\begin{lemma}\label{Lemma: betabounds_powerflow}
    For every optimal solution $(\alpha^*,\beta^*)$ to \eqref{Prob: third_level_dual}, we have that $$p_{sl}^t-f_i^+\leq \beta_{DG_i,t}^*\leq \max\{r_i^{+,t},r_i^{-,t}, p_{sl}^t\}-\min\{f_i^+,f_i^-\} \text{ for every  }i\in \Ncal_{DG}, t\in T.$$
\end{lemma}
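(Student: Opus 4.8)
The plan is to read $\beta^*$ purely as an optimal dual vector of the linear program \eqref{Prob: third_level_primal} and to extract both inequalities from dual feasibility together with complementary slackness; this is preferable to an explicit re-dispatch argument because it covers \emph{every} optimal $(\alpha^*,\beta^*)$ at once. First I would use the dual equality $a_{DG_i}^\top\alpha-\beta_{DG_i,t}=0$ of \eqref{Prob: third_level_dual}: it says that $\beta_{DG_i,t}^*$ is the sum of those components of $\alpha^*$ whose constraints contain the variable $P_{DG_i}^t$. Scanning \eqref{Constr: adjust_fossil_fuel_generators}--\eqref{Constr: mu_bound}, the variable $P_{DG_i}^t$ occurs only in its defining equation \eqref{Constr: adjust_distributed_generators}, in the lower half of its box constraint \eqref{Constr: P_DG_leq_P_DG_max}, and in the nodal balance \eqref{Constr: power_flow1} (or \eqref{Constr: power_flow2}) of the bus $k$ it is attached to. Hence $\beta_{DG_i,t}^*=\lambda_{i,t}^*+\sigma_{i,t}^*+\pi_{k,t}^*$, where $\lambda$ is the free multiplier of \eqref{Constr: adjust_distributed_generators}, $\sigma\ge 0$ the multiplier of the lower box constraint, and $\pi_{k,t}^*$ the locational price at bus $k$, i.e.\ the multiplier of that bus' nodal balance.

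Second, I would pin down $\lambda_{i,t}^*$ through the adjustment variables $P_{DG_i}^{t,+}$ and $P_{DG_i}^{t,-}$. Each appears only in \eqref{Constr: adjust_distributed_generators}, in the intra-day market clearing \eqref{Constr: second_level_market_clearing} (multiplier $\nu_t^*$), and in its own sign constraint from \eqref{Constr: pg1}--\eqref{Constr: pg4}, with objective coefficients $f_i^+$ and $f_i^-$; the two dual-feasibility equalities together with the nonnegativity of the sign-constraint multipliers squeeze $\lambda_{i,t}^*$ into $[\nu_t^*-\max\{f_i^+,f_i^-\},\ \nu_t^*-\min\{f_i^+,f_i^-\}]$. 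One more dual-feasibility equality, for the free intra-day variable $P_{sl}^t$ — which sits only in \eqref{Constr: second_level_market_clearing} and in the root balance \eqref{Constr: power_flow2}, with objective coefficient $p_{sl}^t$ — yields $\nu_t^*=p_{sl}^t-\pi_{0,t}^*$. Substituting, $\beta_{DG_i,t}^*=p_{sl}^t-\xi_{i,t}^*+\sigma_{i,t}^*+\bigl(\pi_{k,t}^*-\pi_{0,t}^*\bigr)$ with $\xi_{i,t}^*\in[\min\{f_i^+,f_i^-\},\max\{f_i^+,f_i^-\}]$ and $\sigma_{i,t}^*\ge 0$. Complementary slackness disposes of the coupling between $\sigma$ and $\beta$: if $\sigma_{i,t}^*>0$ and $\beta_{DG_i,t}^*>0$ both held, then $P_{DG_i}^t$ would be pinned to $P_{DG_i,\min}^t=P_{DG_i,\max}^t$, a degenerate case one excludes, so whenever $\beta_{DG_i,t}^*>0$ we may take $\sigma_{i,t}^*=0$, and if $\beta_{DG_i,t}^*=0$ both claimed bounds hold trivially once one checks the signs of the constants.

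The remaining and, I expect, hardest ingredient is to control the locational-price gap $\Delta_{i,t}\coloneqq\pi_{k,t}^*-\pi_{0,t}^*$, which is exactly where the DC power flow couples the bus $k$ of generator $i$ to the trading node $0$. Writing out dual feasibility for the free line-flow variables $p_{ij}^t$ (which occur in \eqref{Constr:branch_power_balance}, in the two incident nodal balances, and in the branch thermal constraints) and for the free phase angles $\theta_i^t$ (which occur only in \eqref{Constr:branch_power_balance}), one finds that on any time slice on which no thermal limit is active the vector $(\pi_{k,t}^*)_{k\in\Bcal}$ is harmonic for the weighted graph Laplacian with edge weights $1/x_{ij}$ and node $0$ as the only boundary vertex; by the maximum principle on a connected graph it is then constant, so $\Delta_{i,t}=0$ and in fact $\beta_{DG_i,t}^*\in[p_{sl}^t-f_i^+,\ p_{sl}^t-f_i^-]$, which is stronger than claimed. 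When thermal limits bind, $\pi_{k,t}^*$ picks up a congestion component, and I would bound it on both sides by the re-dispatch economics of the controllable conventional generation available at bus $k$: its ramp-up and ramp-down costs $r^{+,t},r^{-,t}$ clamp the locational price there, which is precisely what makes $\max\{r_i^{+,t},r_i^{-,t},p_{sl}^t\}$ appear in the upper bound while $\Delta_{i,t}\ge 0$ keeps the lower bound at $p_{sl}^t-f_i^+$. Making the sign bookkeeping of the DC-power-flow Lagrangian precise, and identifying exactly which conventional generator does the clamping, is the delicate part; a fully rigorous write-up may need the mild standing assumption that every bus carries controllable conventional capacity (or else the $r_i^{\pm,t}$ in the statement should be read as a network-wide maximum). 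As a consistency check on the lower bound I would also note the matching primal estimate: enlarging the renewable cap at $(i,t)$ by $\delta$, one may spend $f_i^+\delta$ to raise $P_{DG_i}^{t,+}$ and recover $p_{sl}^t\delta$ by lowering the intra-day purchase $P_{sl}^t$ — the market-clearing equation \eqref{Constr: second_level_market_clearing} rebalancing automatically — so the marginal value of the cap is at least $p_{sl}^t-f_i^+$.
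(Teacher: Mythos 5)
Your route is genuinely different from the paper's: you try to read the bound off dual feasibility of \eqref{Prob: third_level_primal} directly, decomposing $\beta_{DG_i,t}^*$ into the multiplier of \eqref{Constr: adjust_distributed_generators}, the multiplier of the lower box bound, and the locational price $\pi_{k,t}^*$ of the bus hosting the generator. The paper instead avoids the network entirely by a penalty argument: it replaces the cap \eqref{Constr: P_DG_ub} by a penalized slack $\lambda_{i,t}$ with coefficient $\beta_{i,t}^+=\max\{r_i^{+,t},r_i^{-,t},p_{sl}^t\}-\min\{f_i^+,f_i^-\}$, argues by a primal re-dispatch that a positive slack is never profitable at that penalty rate (so adding the explicit upper bound on $\beta$ to the dual changes nothing), and obtains the lower bound by exhibiting an unbounded primal ray when $\beta_{i,t}^+<p_{sl}^t-f_i^+$. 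Your closing ``consistency check'' is in fact exactly the paper's lower-bound mechanism.

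The genuine gap in your argument is the congestion term $\Delta_{i,t}=\pi_{k,t}^*-\pi_{0,t}^*$, and it is not cosmetic. You establish $\Delta_{i,t}=0$ only when no thermal limit is active; in the congested case you assert that the ramp costs of ``the controllable conventional generation available at bus $k$'' clamp $\pi_{k,t}^*$, which (i) presupposes that the relevant bus hosts a conventional generator --- an assumption the lemma does not make and the test systems of Section \ref{Sec:computationals_results} do not satisfy (bus $5$ of the 5-bus system carries only a DG) --- and (ii) leaves the claimed sign $\Delta_{i,t}\ge 0$ unsupported: under binding line limits a DC-OPF nodal price can fall \emph{below} the reference price, so your decomposition does not secure the lower bound $p_{sl}^t-f_i^+$ either. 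Since everything beyond the uncongested case lives in this term, the proof is incomplete as it stands. A smaller issue: your complementary-slackness treatment of $\sigma_{i,t}^*$ quietly excludes the degenerate case $P_{DG_i,\min}^t=P_{DG_i,\max}^t$, while the lemma is stated for every optimal dual solution. The paper's aggregate re-dispatch argument needs neither fix, because it only tracks where a marginal unit of surplus renewable power can go (intra-day sale, storage, or conventional ramping) and never isolates individual nodal multipliers --- which is precisely why the network structure never enters its bound.
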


\begin{proof}
    Consider the following relaxed version of \eqref{Prob: third_level_primal}, where we penalized violations in \eqref{Constr: P_DG_ub} instead of incorporating \eqref{Constr: P_DG_ub} as a hard constraint:
    \begin{subequations}\label{Prob: penalized_third_level_primal}
    \begin{align}
        \min\ & c^\top y + \sum_{i\in \Ncal_{DG},t\in T} \beta_{i,t}^+ \lambda_{i,t}\\
        \st\ & Ay\geq b,\\
        & -P_{DG_i}^t + \lambda_{i,t} \geq -P_{DG_i,\max}^t && \text{for every } i\in \Ncal_{DG}, t\in T\\
        & \lambda_{i,t} \geq 0 && \text{for every } i\in \Ncal_{DG}, t\in T,
    \end{align}
\end{subequations}
where $\beta_{i,t}^+\coloneqq \max\{r_i^{+,t},r_i^{-,t}, p_{sl}^t\}-\min\{f_i^+,f_i^-\}$.
Suppose $\lambda_{i,t}>0$, then we can increase the value of $P_{DG_i}^t$ by at most $\lambda_{i,t}$, thereby at least decreasing the objective value by $\min\{f_i^+,f_i^-\}\lambda_{i,t}$. Due to the power balance equations \eqref{Constr: power_flow1} and \eqref{Constr: power_flow2}, we have to either sell the energy on the (second-level) market, i.e., decrease $P_{sl}^t$, which results in a benefit of $p_{sl}^t\lambda_{i,t}$, (in-)decrease $P_{ch_i},P_{dch_i}^t,P_{G_i,\text{reg}}$ resulting either in a benefit of $0$ or at most $\max\{r_i^{+,t},r_i^{-,t}\}\lambda_{i,t}$ respectively. Given this $\beta_{i,t}^+$, we obtain by strong duality:

\begin{subequations}\label{Prob: penalized_third_level_problem_dual}
    \begin{align}
        c^\top y^*=\max\ & b^\top \alpha - \sum_{i\in \Ncal_{DG}, t\in T} P_{DG_i,\max}^t \beta_{DG_i,t}\\
        \st\ & (A')^\top \alpha= c,\\
        & a_{DG_i}^\top \alpha - \beta_{DG_i,t} = 0 && \text{ for every } i\in \Ncal_{DG}, t\in T,\\
        & \beta_{DG_i,t} + \gamma_{i,t} = \beta_{i,t}^+ && \text{ for every } i\in \Ncal_{DG}, t\in T,\\
        & \alpha,\beta,\gamma \geq 0. \label{Constr: lemma_bound_beta_last}
    \end{align}
\end{subequations}
Here, the last two constraints imply
$$\beta_{DG_i,t}\leq \beta_{i,t}^+ \text { for every } i\in \Ncal_{DG}, t\in T,$$
i.e. adding a sufficiently large upper bound on $\beta_{DG}$ does not change the outcome of the dual program and hence, we can safely assume $0 \leq \beta_{DG} \leq \beta^+$.

For the inequality $p_{sl}^t-f_i^+\leq \beta_{DG_i,t}^*$, we observe that \eqref{Prob: penalized_third_level_primal} is unbounded whenever $\beta_{i,t}^+<p_{sl}^t-f_i^+$:
Consider an optimal solution $(y^*,P_{DG}^*)$ of \eqref{Prob: third_level_primal}, which is feasible for \eqref{Prob: penalized_third_level_primal} with $\lambda_{i,t}^*=0$ for every $i\in \Ncal_{DG}, t\in T$.
If we fix $k\in \Ncal_{DG}, t'\in T$, then we observe that $P_{DG_k}^{t'} (\mu)\coloneqq (P_{DG_k}^{t'})^* + \mu$, $P_{DG_k}^{+,t'} (\mu)\coloneqq (P_{DG_k}^{+,t'})^* + \mu$, $\lambda_{k,t'}(\mu) \coloneqq \lambda_{k,t'}^* +\mu$, $P_{sl}^{t'}(\mu)\coloneqq (P_{sl}^{t'})^* -\mu$ is also feasible for \eqref{Prob: penalized_third_level_primal} for every $\mu >0$.
Its objective value is 
$$c^\top y^* - p_{sl}^{t'} \mu + f_k^+ \mu + \sum_{i\in \Ncal_{DG}, t\in T} \beta_{i,t}^+ \lambda_{i,t}^* + \beta_{k,t'}^+ \mu,$$
which tends to $-\infty$ for $\mu\rightarrow \infty$, if $\beta_{k,t'}^+<p_{sl}^{t'}-f_k^+$. By strong duality, we obtain, that for $\beta_{k,t'}^+<p_{sl}^{t'}-f_k^+$ \eqref{Prob: penalized_third_level_problem_dual} does not have a feasible solution. Since $k,t'$ have been chosen arbitrarily, the following problem is equivalent to \eqref{Prob: penalized_third_level_problem_dual}:

\begin{subequations}
    \begin{align}
        c^\top y^*=\max\ & b^\top \alpha - \sum_{i\in \Ncal_{DG}, t\in T} P_{DG_i,\max}^t \beta_{DG_i,t} \\
        \st\ & (A')^\top \alpha= c,\\
        & a_{DG_i}^\top \alpha - \beta_{DG_i,t} = 0 && \text{ for every } i\in \Ncal_{DG}, t\in T,\\
        & \beta_{DG_i,t} + \gamma_{i,t} = \max\{r_i^{+,t},r_i^{-,t}, p_{sl}^t\}-\min\{f_i^+,f_i^-\} && \text{ for every } i\in \Ncal_{DG}, t\in T,\\
        & \beta_{DG_i,t} - \delta_{i,t} = p_{sl}^t-f_i^+ && \text{ for every } i\in \Ncal_{DG}, t\in T,\\
        & \alpha,\beta,\gamma, \delta \geq 0. 
    \end{align}
\end{subequations}
\end{proof}

Moreover, this bound might even be sharp due to the following observation:
\begin{remark}
Depending on the market operations, that is considered, we may have that both, $r_i^{+,t},r_i^{-,t} \leq p_{sl}^t$ and $f_i^+\leq f_i^-$ holds. In this case the two bounds in Lemma \ref{Lemma: betabounds_powerflow} coincide and
$$\beta_{DG_i,t}^*=p_{sl}^t-f_i^+.$$
\end{remark}

Thus, in those cases the McCormick envelopes are exact and the upcoming corollaries provide an exact reformulation of \eqref{Prob: entire_adjustable_RO}. However, even if the McCormick envelope is not exact, the boundedness of both, $\beta_{DG}$ and $P_{DG,\max}$ enables us to relax the second level problem $\max_{h\in \Omega} D(x,h)$ with the McCormick envelope. This gives rise to the following corollary of Theorem \ref{Thm: linear_single_level_approximation_general}:
\begin{coroll}\label{Cor: linear_single_level_approximation}
    Let $\Omega$ be a robust ambiguity set as in \eqref{Eq: Def_Omega}, where $\Omega$ denotes a polytope with potential nonnegative slack variables $\eta$ in the rows $i\in I$, i.e. $\Omega=\left\{P_{DG,\max}\in \R^{\Ncal_{DG}\times T}, \eta \in \R^I_{\geq 0}: A_\Omega^\top P_{DG,\max} + \eta =b_\Omega\right\}$. Let further $\beta_{i,t}^-,\beta_{i,t}^+$ be a lower/upper bound for $\beta_{DG_i,t}$. Then, the following linear program provides an upper bound to \eqref{Prob: entire_adjustable_RO}:
    \begin{subequations}
    \begin{align}
        \min\ & \sum_{i\in \Ncal_G,t\in T} c_{G_i^t,2} (P_{G_i}^t)^2 + c_{G_i^t,1} P_{G_i}^t + c_{G_i^t,0} + \sum_{t\in T} p_{fl}^t P_{fl}^t + c^\top y\notag\\
        & + \sum_{i\in \Ncal_{DG}, t\in T}\beta_{i,t}^+u_{i,t}^{\beta,+}
        + \sum_{i\in \Ncal_{DG}, t\in T}\beta_{i,t}^-u_{i,t}^{\beta,-}
        + b_\Omega^\top u_\Omega \notag\\
        & - \sum_{i\in \Ncal_{DG}, t\in T}P_{DG_i,\min}^t \beta_{i,t}^- \env_{i,t}^1 - \sum_{i\in \Ncal_{DG}, t\in T} P_{i,t}^+\beta_{i,t}^+ \env_{i,t}^2 \notag\\
        & - \sum_{i\in \Ncal_{DG}, t\in T} P_{i,t}^+\beta_{i,t}^- \env_{i,t}^3 - \sum_{i\in \Ncal_{DG}, t\in T} P_{DG_i,\min}^t\beta_{i,t}^+ \env_{i,t}^4\\
        \st\ & Ay \geq b,\\
        & -P_{DG_i}^t + u_{i,t}^{\beta,+} + u_{i,t}^{\beta,-} -P_{DG_i,\min}^t\env_{i,t}^1 - P_{i,t}^+\env_{i,t}^2\notag\\
        &\qquad - P_{i,t}^+ \env_{i,t}^3 -P_{DG_i,\min}^t\env_{i,t}^4 \geq 0 && \forall i\in \Ncal_{DG}, t\in T\\
        & u_{i,t}^{\beta,+} \geq 0,\ -u_{i,t}^{\beta,-} \geq 0 && \forall i\in \Ncal_{DG}, t\in T\\
        & (A_\Omega u_\Omega)_i -\beta_{i,t}^- (\env_{i,t}^1 + \env_{i,t}^3) -\beta_{i,t}^+ (\env_{i,t}^2 + \env_{i,t}^4) \geq 0 && \forall i\in \Ncal_{DG}, t\in T\\
        & u_\Omega \geq 0\\
        & \env_{i,t}^1 + \env_{i,t}^2 + \env_{i,t}^3 + \env_{i,t}^4 \geq -1 && \forall i\in \Ncal_{DG}, t\in T\\
        & -\env_{i,t}^1, -\env_{i,t}^2 \geq 0 && \forall i\in \Ncal_{DG}, t\in T\\
        & \env_{i,t}^3, \env_{i,t}^4 \geq 0 && \forall i\in \Ncal_{DG}, t\in T\\
        & \eqref{Constr: fl1_define_Pfl}, \ \& \ \eqref{Constr: fl2_bounds_P_G}
    \end{align}
\end{subequations}
\end{coroll}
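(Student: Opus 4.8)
The statement is a direct specialization of Theorem~\ref{Thm: linear_single_level_approximation_general} to the concrete power-system model assembled above, so the plan is not to prove anything new but (i) to match the abstract data $(G,\Xcal,A,b,B,B_x,B_h,b_0,\Omega)$ against the present instance, (ii) to check that Assumption~\ref{Ass: bounds_on_h_and_beta} holds here, and (iii) to substitute this data into \eqref{Prob: McCormick_relaxation_three_level_program} and simplify.

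First I would set up the dictionary. The first-level objective is $G(x)=\sum_{i\in\Ncal_G,t\in T}\bigl(c_{G_i^t,2}(P_{G_i}^t)^2+c_{G_i^t,1}P_{G_i}^t+c_{G_i^t,0}\bigr)+\sum_{t\in T}p_{fl}^tP_{fl}^t$ and $\Xcal$ is the polytope cut out by \eqref{Constr: fl1_define_Pfl}--\eqref{Constr: fl2_bounds_P_G}. The second-level variable is $h=P_{DG,\max}$, ranging over the polytope $\Omega$ of \eqref{Eq: Def_Omega}, which I would write in the form $A_\Omega^\top P_{DG,\max}+\eta=b_\Omega$, $\eta\ge 0$ with $B_\Omega$ the identity on the slack block. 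As observed right after \eqref{Eq: Def_Omega}, the only constraint of $\Ycal(x,h)$ that couples the three levels is \eqref{Constr: P_DG_leq_P_DG_max}, rewritten as the coupling block \eqref{Constr: P_DG_ub}; hence $B=\begin{pmatrix}0 & -I_{DG}\end{pmatrix}$, $B_x=0$, $b_0=0$, and $B_h=-I$ on the index set $\Ncal_{DG}\times T$, while the remaining constraints \eqref{Constr: adjust_fossil_fuel_generators}--\eqref{Constr: mu_bound} (the integer-fixing block $A_fy\ge b_f$ of Theorem~\ref{Thm: linear_single_level_approximation_general} being realized by a fixed $0/1$ assignment of $\mu_{ch},\mu_{dch}$) make up $Ay\ge b$.

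Second, I would verify Assumption~\ref{Ass: bounds_on_h_and_beta}. Compactness of $\Omega$ gives $P_{DG_i,\min}^t\le P_{DG_i,\max}^t\le P_{i,t}^+$ at once, and Lemma~\ref{Lemma: betabounds_powerflow} supplies explicit, finite bounds $\beta_{i,t}^-\le\beta_{DG_i,t}\le\beta_{i,t}^+$ that leave the optimum of the dual program unchanged; thus the hypotheses of Theorem~\ref{Thm: linear_single_level_approximation_general} hold with $h^-_{i,t}=P_{DG_i,\min}^t$ and $h^+_{i,t}=P_{i,t}^+$.

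Finally, I would plug this data into \eqref{Prob: McCormick_relaxation_three_level_program}. The one point that needs care --- and the only real work --- is that $B_h=-I$ is diagonal, so the bilinear form reduces to $h^\top B_h\beta=-\sum_{i\in\Ncal_{DG},t\in T}P_{DG_i,\max}^t\beta_{DG_i,t}$ with no off-diagonal products; consequently every double sum $\sum_{i\in I,j\in J}$ in \eqref{Prob: McCormick_relaxation_three_level_program} collapses to a single sum over $(i,t)\in\Ncal_{DG}\times T$, the four McCormick variables acquire a single index $\env_{i,t}^k$, the constraint block indexed over $i\in I$ becomes one indexed over $(i,t)$, and the envelope inequality $\env_{ij}^1+\env_{ij}^2+\env_{ij}^3+\env_{ij}^4\ge(B_h)_{ij}$ becomes $\env_{i,t}^1+\env_{i,t}^2+\env_{i,t}^3+\env_{i,t}^4\ge -1$. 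Substituting further $B_x=0$, $b_0=0$, and $B_\Omega=I$ (so that $B_\Omega u_\Omega\ge0$ reads $u_\Omega\ge0$), and appending the first-level objective $G$ and feasible set $\Xcal$, reproduces the program in the statement verbatim; by Theorem~\ref{Thm: linear_single_level_approximation_general} it is an upper bound for \eqref{Prob: entire_adjustable_RO}. The main obstacle is therefore purely notational \emph{bookkeeping} --- keeping indices and signs straight between the abstract $I\times J$ McCormick block and the diagonal, single-indexed block that survives here, and tracking the slack structure of $\Omega$ --- with no new analytic content beyond Theorem~\ref{Thm: linear_single_level_approximation_general} and Lemma~\ref{Lemma: betabounds_powerflow}.
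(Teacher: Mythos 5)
Your proof is correct and takes essentially the same route as the paper: the authors themselves note that the corollary is a direct consequence of Theorem~\ref{Thm: linear_single_level_approximation_general}, with Lemma~\ref{Lemma: betabounds_powerflow} and compactness of $\Omega$ supplying Assumption~\ref{Ass: bounds_on_h_and_beta}, and your dictionary ($B_h=-I$ diagonal, $B_x=0$, $b_0=0$, $h^-=P_{DG,\min}$, $h^+=P^+$) matches theirs. The only cosmetic difference is that the paper re-executes the dualize--McCormick--dualize argument in the specialized notation rather than substituting into \eqref{Prob: McCormick_relaxation_three_level_program} as you do, but the content is identical.
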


We note, that Corollary \ref{Cor: linear_single_level_approximation} is a direct consequence of Theorem \ref{Thm: linear_single_level_approximation_general} and thus its proof follows the same lines. However, we include the full proof here, as the notation varies a bit and the proof illustrates the impact of Lemma \ref{Lemma: betabounds_powerflow}.

\begin{proof}
    We observe that with \eqref{Prob: third_level_dual} and Lemma \ref{Lemma: betabounds_powerflow} the second-level $\max_{P_{DG,\max}\in \Omega} \min_{y\in \Ycal(x,h)} c^\top y$ can be written as
\begin{subequations}\label{Prob: second_level_primal}
    \begin{align}
        \max\ & b^\top \alpha - \sum_{i\in \Ncal_{DG}, t\in T} P_{DG_i,\max}^t \beta_{DG_i,t} \\
        \st\ & (A')^\top \alpha= c,\\
        & a_{DG_i}^\top \alpha - \beta_{DG_i,t} = 0 && \text{ for every } i\in \Ncal_{DG}, t\in T,\\
        & \beta_{DG_i,t} + \gamma_{i,t} = \beta_{i,t}^+ && \text{ for every } i\in \Ncal_{DG}, t\in T,\\
        & \beta_{DG_i,t} - \delta_{i,t} = \beta_{i,t}^- && \text{ for every } i\in \Ncal_{DG}, t\in T,\\
        & A_\Omega^\top P_{DG,\max} + \eta = b_\Omega,\\
        & \alpha,\beta,\gamma,P_{DG,\max},\delta, \eta \geq 0,
    \end{align}
\end{subequations}
where $\beta^-,\beta^+$ are chosen as in Lemma \ref{Lemma: betabounds_powerflow}. Next, we substitute $\kappa_{i,t}\coloneqq P_{DG_i,\max}^t \beta_{DG_i,t}$ in the objective and relax the resulting constraint $\kappa_{i,t}\coloneqq P_{DG_i,\max}^t \beta_{DG_i,t}$ by a McCormick envelope. Note, that since $P_{DG_i,\max}^t, \beta_{DG_i,t}\geq 0$, we can immediately conclude $\kappa\geq 0$, which simplifies our notation a bit. If we further introduce suitable nonnegative slack variables, we obtain the following dual LP: 
\begin{subequations}\label{Prob: second_level_primal_relaxed}
    \begin{align}
        \max\ & b^\top \alpha - \sum_{i\in \Ncal_{DG}, t\in T} P_{DG_i,\max}^t \beta_{DG_i,t} \label{Constr: Thm1_dual_first}\\
        \st\ & (A')^\top \alpha= c,\\
        & a_{DG_i}^\top \alpha - \beta_{DG_i,t} = 0 && \text{ for every } i\in \Ncal_{DG}, t\in T,\\
        & \beta_{DG_i,t} + \gamma_{i,t} = \beta_{i,t}^+ && \text{ for every } i\in \Ncal_{DG}, t\in T,\\
        & \beta_{DG_i,t} - \delta_{i,t} = \beta_{i,t}^- && \text{ for every } i\in \Ncal_{DG}, t\in T,\\
        & A_\Omega^\top P_{DG,\max} + \eta = b_\Omega,\\
        & \kappa_{i,t} = P_{DG_i,\min}^t \beta_{DG_i,t} + P_{DG_i,\max}^t\beta_{i,t}^- - P_{DG_i,\min}^t \beta_{i,t}^- + \eta_{i,t}^1  && \text{for every } i\in \Ncal_{DG}, t\in T, \\
        & \kappa_{i,t} = P_{i,t}^+\beta_{DG_i,t} + P_{DG_i,\max}^t \beta_{i,t}^+ - P_{i,t}^+\beta_{i,t}^+ + \eta_{i,t}^2 = 0  && \text{for every } i\in \Ncal_{DG}, t\in T, \label{Constr: McCormick_lb2}\\
        & \kappa_{i,t} = P_{i,t}^+ \beta_{DG_i,t} + P_{DG_i,\max}^t \beta_{i,t}^- -P_{i,t}^+\beta_{i,t}^- -\eta_{i,t}^3 && \text{for every } i\in \Ncal_{DG}, t\in T, \\
         & \kappa_{i,t} = P_{DG_i,\max}^t \beta_{i,t}^+ + P_{DG_i,\min} \beta_{DG_i,t} - P_{DG_i,\min}^t\beta_{i,t}^+ -\eta_{i,t}^4  && \text{for every } i\in \Ncal_{DG}, t\in T, \\
        & \alpha,\beta,\gamma, \delta,P_{DG,\max},\rho,\eta,\kappa \geq 0.
    \end{align}
\end{subequations}

If we denote the dual variables of \eqref{Constr: Thm1_dual_first} -- \eqref{Constr: McCormick_lb2} by $y, u_{i,t}^{\beta,+}, u_{i,t}^{\beta,-}, u_\Omega, \env_{i,t}^1,\env_{i,t}^2,\env_{i,t}^3,\env_{i,t}^4$ respectively, then the result follows by strong duality and including the first-level variables and objectives.
\end{proof}
Again, we observe that Corollary \ref{Cor: linear_single_level_approximation} provides an LP inner approximation of the linear relaxation of \eqref{Prob: entire_adjustable_DRO}. Now, the following direct corollary of Theorem \ref{Thm: single_level_MIP_general} incorporates the discrete (binary) decisions $\mu_{ch}$ and $\mu_{dch}$. 

\begin{coroll}\label{Cor: single_level_MIP}
    Let $\Omega$ be a robust ambiguity set as in \eqref{Eq: Def_Omega}, where $\Omega$ denotes a polytope with potential nonnegative slack variables $\eta$ in the rows $i\in I$, i.e. $\Omega=\left\{P_{DG,\max}\in \R^{\Ncal_{DG}\times T}, \eta \in \R^I_{\geq 0}: A_\Omega^\top P_{DG,\max} + \eta =b_\Omega\right\}$. Let further $\beta_{i,t}^-,\beta_{i,t}^+$ be a lower/upper bound for $\beta_{DG_i,t}$. Then, the following MIP provides an upper bound to the tri-level MIP \eqref{Prob: entire_adjustable_DRO} with the given parameters from Section \ref{Sec: powerflow_theory}:
    \begin{subequations}\label{Prob: single_level_MIP}
    \begin{align}
         \min\ & \sum_{i\in \Ncal_G,t\in T} c_{G_i^t,2} (P_{G_i}^t)^2 + c_{G_i^t,1} P_{G_i}^t + c_{G_i^t,0} + \sum_{t\in T} p_{fl}^t P_{fl}^t + c^\top y\notag\\
        & + \sum_{i\in \Ncal_{DG}, t\in T}\beta_{i,t}^+u_{i,t}^{\beta,+}
        + \sum_{i\in \Ncal_{DG}, t\in T}\beta_{i,t}^-u_{i,t}^{\beta,-}
        + b_\Omega^\top u_\Omega \notag\\
        & - \sum_{i\in \Ncal_{DG}, t\in T}P_{DG_i,\min}^t \beta_{i,t}^- \env_{i,t}^1 - \sum_{i\in \Ncal_{DG}, t\in T} P_{i,t}^+\beta_{i,t}^+ \env_{i,t}^2 \notag\\
        & - \sum_{i\in \Ncal_{DG}, t\in T} P_{i,t}^+\beta_{i,t}^- \env_{i,t}^3 - \sum_{i\in \Ncal_{DG}, t\in T} P_{DG_i,\min}^t\beta_{i,t}^+ \env_{i,t}^4\\
        \st\ & Ay \geq b,\\
        & -P_{DG_i}^t + u_{i,t}^{\beta,+} + u_{i,t}^{\beta,-} -P_{DG_i,\min}^t\env_{i,t}^1 - P_{i,t}^+\env_{i,t}^2 \notag\\
        & \qquad - P_{i,t}^+ \env_{i,t}^3 -P_{DG_i,\min}^t\env_{i,t}^4 \geq 0 && \forall i\in \Ncal_{DG}, t\in T\\
        & u_{i,t}^{\beta,+} \geq 0,\ -u_{i,t}^{\beta,-} \geq 0 && \forall i\in \Ncal_{DG}, t\in T\\
        & (A_\Omega u_\Omega)_i -\beta_{i,t}^- (\env_{i,t}^1 + \env_{i,t}^3) -\beta_{i,t}^+ (\env_{i,t}^2 + \env_{i,t}^4) \geq 0 && \forall i\in \Ncal_{DG}, t\in T\\
        & u_\Omega \geq 0\\
        & \env_{i,t}^1 + \env_{i,t}^2 + \env_{i,t}^3 + \env_{i,t}^4 \geq -1 && \forall i\in \Ncal_{DG}, t\in T\\
        & -\env_{i,t}^1, -\env_{i,t}^2 \geq 0 && \forall i\in \Ncal_{DG}, t\in T\\
        & \env_{i,t}^3, \env_{i,t}^4 \geq 0 && \forall i\in \Ncal_{DG}, t\in T\\   
        & \eqref{Constr: fl1_define_Pfl}, \ \& \ \eqref{Constr: fl2_bounds_P_G} \label{Constr: single_level_MIP_env2_lb}\\
        & \mu_{ch}, \mu_{dch} \in \{0,1\}^{\Ncal_S\times T}\label{Constr: IP_constraint} \\
        & P_{G_i}^t, P_{fl}^t, y', P_{DG_i}^t, u^{\beta,+}_{i,t}, u^{\beta,-}_{i,t}, u^P_{i,t}, \env^1_{i,t}, \env^2_{i,t}, \env^3_{i,t}, \env^4_{i,t} \in \R.
    \end{align}
\end{subequations}
\end{coroll}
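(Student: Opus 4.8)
The plan is to obtain Corollary \ref{Cor: single_level_MIP} as the specialization of Theorem \ref{Thm: single_level_MIP_general} to the power-system data of Section \ref{Sec: powerflow_theory}, in exactly the way Corollary \ref{Cor: linear_single_level_approximation} specializes Theorem \ref{Thm: linear_single_level_approximation_general}. First I would record the identification of the abstract ingredients: the first-level variable is $x=(P_G^\top,P_{fl})^\top$ with feasible set $\Xcal$ given by \eqref{Constr: fl1_define_Pfl}--\eqref{Constr: fl2_bounds_P_G} and objective $G(x)$ the quadratic generator-plus-market cost; the second-level variable is $h=P_{DG,\max}$ with ambiguity set $\Omega$ from \eqref{Eq: Def_Omega}, written in the required slack form $A_\Omega^\top P_{DG,\max}+\eta=b_\Omega$, $\eta\ge 0$ (so $B_\Omega$ is the identity); the third-level integer block $y_{m+1},\dots,y_{m+l}$ is the pair $(\mu_{ch},\mu_{dch})\in\{0,1\}^{\Ncal_S\times T}\times\{0,1\}^{\Ncal_S\times T}$; and, as already observed in Section \ref{Sec: powerflow_theory}, the only constraint coupling the levels is the upper bound \eqref{Constr: P_DG_leq_P_DG_max}, so $B=\begin{pmatrix}0 & -I_{DG}\end{pmatrix}$, $B_x=0$, $b_0=0$ and $B_h=-I_{DG}$. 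In particular the problem is weakly connected in the sense of Section \ref{Sec: problem setting}.

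Next I would check the hypotheses of Theorem \ref{Thm: single_level_MIP_general}. Assumption \ref{Ass: bounds_on_h_and_beta} holds: $h=P_{DG,\max}$ is bounded because $\Omega$ is compact (equivalently, by the physical capacity limits $P_{DG_i,\min}^t\le P_{DG_i,\max}^t\le P_{i,t}^+$), and $\beta=\beta_{DG}$ is bounded by Lemma \ref{Lemma: betabounds_powerflow}, which supplies explicit $\beta_{i,t}^-,\beta_{i,t}^+$ whose addition to \eqref{Prob: third_level_dual} leaves the optimum unchanged; moreover $\Ycal(x)$ is bounded, since every coordinate of $y$ is confined by an operational, thermal or capacity constraint (generator and DG output ranges, storage and state-of-charge bounds, branch thermal limits, and, with a reference phase angle fixed, the phase angles through the DC flow relations \eqref{Constr:branch_power_balance}). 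With these facts in hand, the proof of Theorem \ref{Thm: single_level_MIP_general} applies verbatim, with Corollary \ref{Cor: linear_single_level_approximation} in place of Theorem \ref{Thm: linear_single_level_approximation_general}: fixing $(\mu_{ch},\mu_{dch})=y'$ turns \eqref{Prob: single_level_MIP} into the LP of Corollary \ref{Cor: linear_single_level_approximation} with the fixing encoded as extra rows $A_fy\ge b_f$, which by that corollary inner-approximates the associated trilevel LP; minimising over the finitely many $y'$ and applying the max--min inequality yields the displayed chain of inequalities, and the Lagrangian relaxation of the $\Omega$-defining equalities together with the fixing constraints, combined with the Ky--Fan minimax theorem (applicable since the Lagrangian is affine, hence concave, in $h=P_{DG,\max}$ on $\Omega$ and affine, hence convex, in $y'$ on the compact set $\{0,1\}^{\Ncal_S\times T}\times\{0,1\}^{\Ncal_S\times T}$), shows that the step marked $(*)$ is an equality, so that the only slack relative to the trilevel problem of Section \ref{Sec: powerflow_theory} comes from the McCormick envelopes.

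The one place that needs care -- and the only genuine obstacle -- is matching the written-out program \eqref{Prob: single_level_MIP} to the template \eqref{Prob: single_level_MIP_general}. Because $B_h=-I_{DG}$, the bilinear form $h^\top B_h\beta$ collapses to the single-indexed sum $-\sum_{i\in\Ncal_{DG},t\in T}P_{DG_i,\max}^t\,\beta_{DG_i,t}$, so in the relaxation only the diagonal products $\kappa_{i,t}=P_{DG_i,\max}^t\beta_{DG_i,t}$ carry a nonzero objective coefficient; the off-diagonal $\kappa$'s, and hence the off-diagonal dual envelope variables $\env_{ij}^{1},\dots,\env_{ij}^{4}$ of the template, drop out, leaving the single-indexed $\env_{i,t}^{1},\dots,\env_{i,t}^{4}$ of the corollary, and the envelope row $\env_{ij}^1+\env_{ij}^2+\env_{ij}^3+\env_{ij}^4\ge (B_h)_{ij}$ becomes $\env_{i,t}^1+\env_{i,t}^2+\env_{i,t}^3+\env_{i,t}^4\ge -1$. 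I would also note that the quadratic but convex $G(x)$ causes no difficulty: it sits outside the LP that is dualised in Theorem \ref{Thm: linear_single_level_approximation_general}/Corollary \ref{Cor: linear_single_level_approximation} and outside the concavity/convexity requirements of the Ky--Fan step, so it is carried along unchanged. Everything else is the bookkeeping already performed in the proof of Corollary \ref{Cor: linear_single_level_approximation}: the dual of the relaxed second-level LP \eqref{Prob: second_level_primal_relaxed}, with multipliers $y,u^{\beta,+},u^{\beta,-},u_\Omega,\env^{1},\dots,\env^{4}$, reproduces the continuous part of \eqref{Prob: single_level_MIP}, and re-imposing $\mu_{ch},\mu_{dch}\in\{0,1\}^{\Ncal_S\times T}$ via \eqref{Constr: IP_constraint} turns that LP into the claimed MIP.
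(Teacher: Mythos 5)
Your proposal is correct and follows exactly the route the paper intends: the paper states this result as a direct corollary of Theorem \ref{Thm: single_level_MIP_general} (with the specialization already worked out for the continuous case in Corollary \ref{Cor: linear_single_level_approximation}) and gives no separate proof, and your write-up is precisely that specialization, including the verification of Assumption \ref{Ass: bounds_on_h_and_beta} via Lemma \ref{Lemma: betabounds_powerflow} and the collapse of the envelope variables to the diagonal because $B_h=-I_{DG}$. Your added remarks on the boundedness of $\Ycal(x)$ and on the quadratic $G(x)$ sitting outside the dualised LP are appropriate and consistent with the paper's treatment.
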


Thus, our approximation technique is applicable to the optimization of smart converters in power grids. Moreover, the only strict relaxation comes from approximating the uncertainty set $\Omega$ by McCormick envelopes of the bilinear terms and may, depending on the second-level (intra-day) market price $p_{sl}$ and the penalizations for adjustments $f^+,f^-,r^+,r^-$, even be sharp. Hence, it seems natural to test our approximations numerically. 
\section{Computational results}\label{Sec:computationals_results}

We present the results of three case studies in this section. All of these instances are considered on a daily basis divided into hourly (24 period) or 15min (96 period) time intervals. The first benchmark is a 5-bus instance, based on the \enquote{case5.m} instance from the matpower library \cite{Zimmerman2011a}. Second, we consider a 30-bus instance, based on the \enquote{case\_ieee30.m} instance of the matpower library and finally a modified version of the IEEE 118-bus system similar to the one in \cite{Cobos2018a}. The computations were executed via Gurobi 10.0.0 under Python 3.7 on a Macbook Pro (2019) notebook with an Intel Core i7 2,8 GHz Quad-core and 16 GB of RAM. 

\subsection{5-bus example}
The topology of the test system is shown in Figure \ref{Fig: 5-bus_system}, where we consider bus $1$ to be the root node. We observe, that the conventional generators are connected to the buses $1$ and $4$, i.e. $\Ncal_G=\{1,4\}$, two distributed generators are connected to buses $1$ and $5$, i.e. $\Ncal_{DG}=\{1,5\}$ and an energy storage unit is connected to bus $3$, i.e. $\Ncal_S=\{3\}$. 

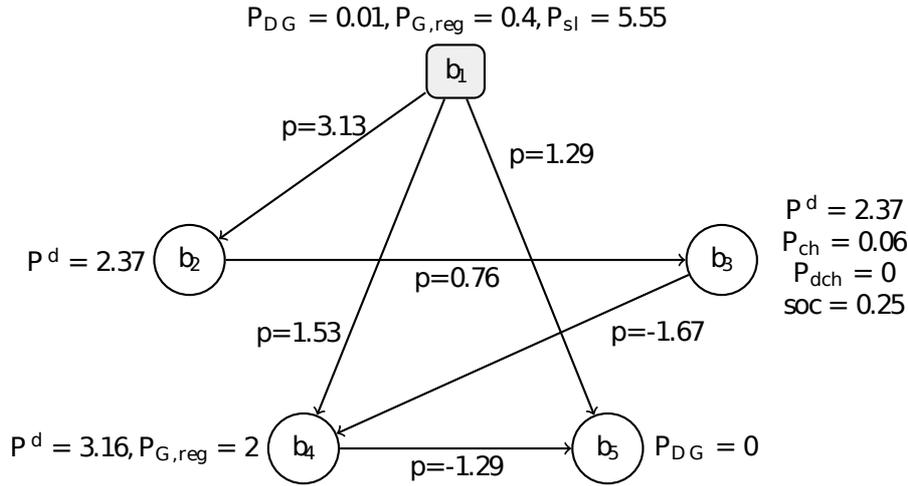
\begin{figure}[ht]\label{Fig: 5-bus_system}

\tikzstyle{block} = [rectangle, draw=black, fill=lightgray!25,text width=1.5em, rounded corners, minimum height=2em, text centered]

\tikzstyle{block3}=[circle, draw=black, fill=white!10,text width=1.5em,rounded corners, minimum height=2em, text centered]

\tikzstyle{line} = [draw, -latex']

\begin{center}
\begin{tikzpicture}[node distance = 2cm, auto,thick]
\node [block] at (0,5)(b1) [label={[align=center]above: $P_{DG}=0.01, P_{G,\text{reg}} =0.4, P_{sl}=5.55$}]{$b_1$};
\node [block3] at (-3.5,2.5) (b2) [label={[align=center]left: $P^d=2.37$}] {$b_2$};
\node [block3] at (3.5,2.5) (b3) [label={[align=center]right: $\begin{array}{c}
						P^d=2.37\\
						P_{ch}=0.06\\
						P_{dch}=0\\
						\text{soc} = 0.25
				\end{array}$}]  {$b_3$};
\node [block3] at (-2,0) (b4) [label={[align=center]left: $P^d=3.16, P_{G,\text{reg}}=2$}] {$b_4$};
\node [block3] at (2,0) (b5) [label={[align=center]right: $P_{DG}=0$}]  {$b_5$};
\draw [->] (b1) to node [left, near start]{p=3.13} (b2);
\draw [->] (b1) to node [left, near end]{p=1.53} (b4);
\draw [->] (b1) to node [auto, near start]{p=1.29} (b5);
\draw [->] (b2) to node [below, midway]{p=0.76} (b3);
\draw [->] (b3) to node [auto, near start]{p=-1.67} (b4);
\draw [->] (b4) to node [below, midway]{p=-1.29} (b5);
\end{tikzpicture}

\end{center}

\caption[BDD]{The case5.m network with its corresponding generators and flows at 3am. $P^d=0$ at buses $1$ and $5$}
\end{figure}

Whether a generator in \enquote{case5.m} is a conventional/renewable one or a storage was decided by the authors.
Both, the day-ahead and intra-day market prices $p_{fl}, p_{sl}$ were taken as averages from the Pecan street data base's \cite{pecanstreet} \enquote{miso} data set for October 3rd, 2022. The daily deviations in $P^d$, denoted by $\Delta_d^t$, or daily deviations in $P_{DG,\max}^t$, denoted by $\Delta_{DG}^t$ were similarly taken from the Pecan street data base's \enquote{california\_iso} dataset for October 3rd, 2022. 

Then, the demand varying over the day was modeled as $P^d=P^d\cdot \Delta_d^t$ and the varying potential renewable energy production was modeled by $P_{DG,\text{forecast}}^t\coloneqq \min\{\frac{P_{DG,\min}+P_{DG,\max} }{2}\cdot \Delta_{DG}^t, P_{i,t}^+\}$.

Solving Problem \eqref{Prob: single_level_MIP} takes less than 1s. However, its objective value highly depends on the uncertainty imposed on the system. We illustrate in Figure \ref{Fig: 5-bus_system_sensitivity} how sensitive the optimal solution reacts to changes in the maximal forecast error $R$, that crucially determines $\Omega$ through \eqref{Eq: Def_Omega}. In particular, since in \enquote{case5.m} we have neglectable upward and downward regulation costs and $f^-\geq f^+$, Lemma \ref{Lemma: betabounds_powerflow} implies that the McCormick relaxation is sharp if and only if $r^-,r^+ \leq p_{sl}$. As realistic penalties $r^+ = r^-$, we assume $r_1^+=r_1^-= 14$\$ p.u., which are the costs of operating the first generator at bus $1$. For the sake of a better analysis, we replace the natural choice $r_4^+=r_4^-= 40$\$ p.u., which are the costs of operating the generator at bus $4$ by $r_4^+=r_4^-= 20$\$ p.u. as then $r^-,r^+ \leq p_{sl}$ and we can compare variations in the penalties to an optimal robust solution.

In particular, for this instance, $R=1$ yields the nominal optimal solution with an objective value of $727082\$$.

\begin{figure}[ht]\label{Fig: 5-bus_system_sensitivity}
\begin{tikzpicture}
\begin{axis}[
    xlabel={$R$},
    ylabel={Objective value [1000\$]},
    xmin=0, xmax=1,
    ymin=700, ymax=1300,
    xtick={0,0.2,0.4,0.6,0.8,1},
    ytick={700,900,1100,1300},
    legend pos=north east,
    ymajorgrids=true,
    grid style=dashed,
]

\addplot[
    color=blue,
    mark=square,
    ]
    coordinates {
    (0,1178.39)(0.25,1065.56)(0.5,952.737)(0.75,839.909)(1,727.082)
    };
    \addlegendentry{Objective per $R$ with $r^+_4=r^-_4= 20$}

\addplot[
    color=red,
    mark=square,
    ]
    coordinates {
    (0,1192.93)(0.25,1080.1)(0.5,967.27)(0.75,854.442)(1,741.615)
    };
    \addlegendentry{Objective per $R$ with $r^+_4=r^-_4= 40$}
    
\end{axis}
\end{tikzpicture}

\caption[BDD]{Objective value of \eqref{Prob: single_level_MIP} on \enquote{case5.m} under varying $R$}
\end{figure}
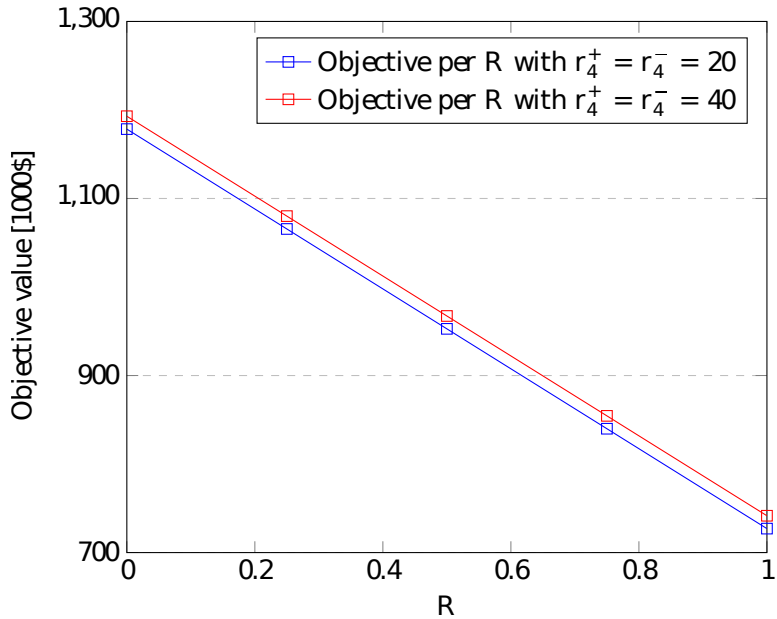
Moreover, the blue line in Figure \ref{Fig: 5-bus_system_sensitivity} illustrates the perfect linear relation between the lower bound of the uncertainty set $\Omega$ and the objective value of \eqref{Prob: single_level_MIP}. We conclude, that the DSO may cut its worst-case costs by almost $50\%$ with a perfectly accurate weather forecast, i.e. $\Omega=\{P_{DG,forecast}\}$. As this is unrealistic with present forecasting methods, we would like to highlight that one may gain already significant cost reductions in the worst-case by incorporating more information on $\Omega$. 

Moreover, the red line in Figure \ref{Fig: 5-bus_system_sensitivity} shows an upper bound to \eqref{Prob: entire_adjustable_RO} given by Corollary \ref{Cor: single_level_MIP} in case $r^{+,t}_4 > p_{sl}^t$ for some $t$, i.e. in case Lemma \ref{Lemma: betabounds_powerflow} is violated. Since the objective value with respect to this penalty is contained between the red and the blue line, Figure \ref{Fig: 5-bus_system_sensitivity} thereby shows a rather strong approximation quality for this particular instance. However, we want to stress, that this only holds for this particular example and may not be a general pattern.

\subsection{30-bus example}
Similarly, as in the 5-bus example, the topology of the 30-bus test system is taken from \enquote{case\_ieee30.m}, a system with 41 transmission lines and after modification 4 dispatchable generators as well as 4 energy storages. The only renewable generator is placed at bus $2$, i.e. $\Ncal_{DG}=\{2\}$. In addition, we choose $\Ncal_{G}=\{5,8,11,13\}$ and the four energy storage units to be connected to buses $1,2,8,13$, i.e. $\Ncal_S=\{1,2,8,13\}$. The estimation procedure of market prices and demands are kept from the \enquote{case5.m} example. Since also \enquote{case\_ieee30.m} does not include upward or downward regulation costs for the generators, the McCormick envelope is sharp and for $R=1$, i.e. $\Omega=\{P_{DG,forecast}\}$, the nominal value of $104,088\$$ is attained.

Moreover, we would like to illustrate the dependency of the worst-case revenue with respect to choices of $R$ in Figure \ref{Fig: 30-bus_system_sensitivity}. We want to highlight, that also in this more elaborate example, the runtime was $<1$s.

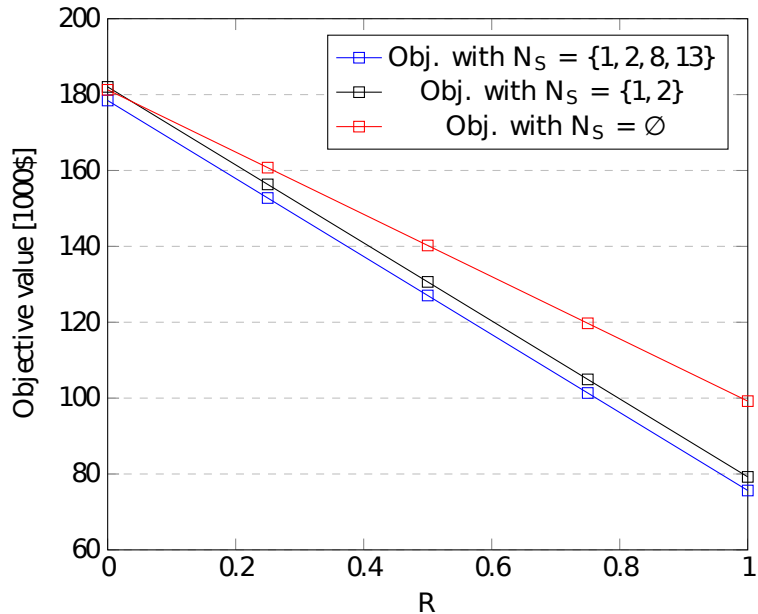
\begin{figure}[ht]\label{Fig: 30-bus_system_sensitivity}
\begin{tikzpicture}
\begin{axis}[
    xlabel={$R$},
    ylabel={Objective value [1000\$]},
    xmin=0, xmax=1,
    ymin=60, ymax=200,
    xtick={0,0.2,0.4,0.6,0.8,1},
    ytick={60,80,...,200},
    legend pos=north east,
    ymajorgrids=true,
    grid style=dashed,
]

\addplot[
    color=blue,
    mark=square,
    ]
    coordinates {
    (0,178.445)(0.25,152.747)(0.5,127.049)(0.75,101.351)(1,75.652)
    };
    \addlegendentry{Obj. with $\Ncal_S=\{1,2,8,13\}$}

\addplot[
    color=black,
    mark=square,
    ]
    coordinates {
    (0,182.030)(0.25,156.332)(0.5,130.634)(0.75,104.935)(1,79.237)
    };
    \addlegendentry{Obj. with $\Ncal_S=\{1,2\}$}

\addplot[
    color=red,
    mark=square,
    ]
    coordinates {
    (0,181.263)(0.25,160.749)(0.5,140.235)(0.75,119.720)(1,99.206)
    };
    \addlegendentry{Obj. with $\Ncal_S=\emptyset$}
    
\end{axis}
\end{tikzpicture}

\caption[BDD]{Objective values of \eqref{Prob: single_level_MIP} with $|\Ncal_S|\in \{0,2,4\}$ on \enquote{case\_ieee30.m}}
\end{figure}
Note, that different slopes may occur due to the different capabilities of the storages. In particular, the improved performance from $\Ncal_S=\emptyset$ to $\Ncal_S\neq \emptyset$ indicates that the capability of storing all renewable energy produced within the transmission system is more valuable than storing energy from prior purchases, i.e. externally produced energy.

\subsection{Analyzing the runtime on larger instances}
After illustrating the behavior of the objective value under different uncertainties and storages, we focus on the main advantage of the proposed MIP approach, namely its speed. As the scaling of the runtime is crucial in industrial applications, we demonstrate the applicability of our algorithm to larger power systems, particularly a 118-bus (\enquote{case118.m}), a 200-bus (\enquote{case200.m}) and 300-bus (\enquote{case300.m}) test system. 

To this end, we aim to keep the considered instances as comparable as we can. In particular, we again keep the estimation procedure of market prices and demands from the \enquote{case5.m} example. Additionally, none of the considered instances
contains upward or downward regulation costs for the generators and thereby due to Lemma \ref{Lemma: betabounds_powerflow} we always compute the exact solutions to \eqref{Prob: entire_adjustable_RO}. Lastly, we note that the number of storages $|\Ncal_S|$ determines the amount of binary variables in \eqref{Prob: single_level_MIP} and consequently should crucially impacts the runtime. Thus, we equipped our test systems with $|\Ncal_S|=6$, $|\Ncal_S|=10$ and $|\Ncal_S|=15$ storages respectively in order to achieve an approximately constant ratio of buses to storages $|V|/|\Ncal_S|$, i.e. $|V|/|\Ncal_S|\approx 20$. To further improve comparability, we also recomputed the $30$-bus system with $2$ storages instead of $4$. The following figure illustrates the achieved runtime:

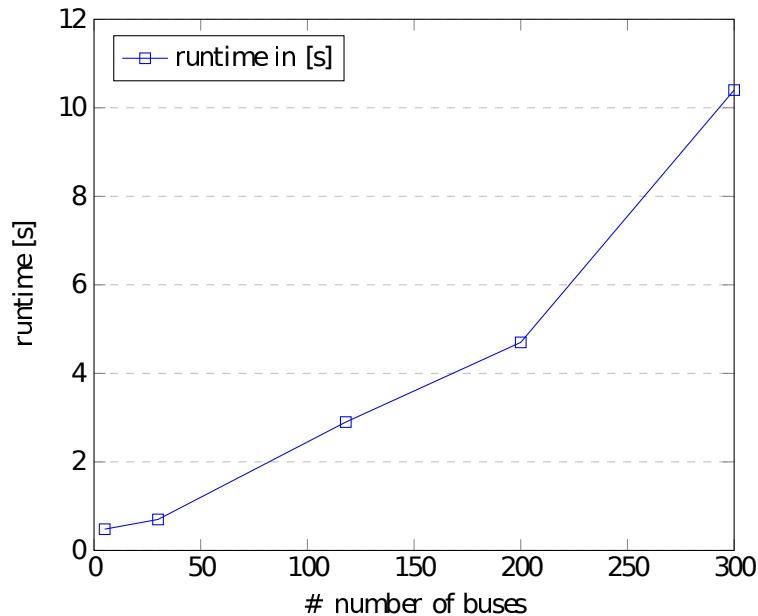
\begin{figure}[ht]\label{Fig: runtime}
\begin{tikzpicture}
\begin{axis}[
    xlabel={\# number of buses},
    ylabel={runtime [s]},
    xmin=0, xmax=300,
    ymin=0, ymax=12,
    xtick={0,50,...,300},
    ytick={0,2,...,12},
    legend pos=north west,
    ymajorgrids=true,
    grid style=dashed,
]

\addplot[
    color=blue,
    mark=square,
    ]
    coordinates {
    (5,0.48)(30,0.7)(118,2.9)(200,4.7)(300,10.4)
    };
    \legend{runtime in [s]}
\end{axis}
\end{tikzpicture}

\caption[BDD]{runtime comparison \enquote{case5.m} with $|\Ncal_S|=1$, \enquote{case\_ieee30.m} with $|\Ncal_S|=2$, \enquote{case118.m} with $|\Ncal_S|=6$,
\enquote{case200.m} with $|\Ncal_S|=10$,
\enquote{case300.m} with $|\Ncal_S|=15$}
\end{figure}

As \eqref{Prob: entire_adjustable_RO} is a notoriously challenging problem, see Question in \cite{Conejo2022a} and Section 6 in \cite{Yanikoglu2019a} benchmarks on the exact problem setting are, to the best of our knowledge, rare. However, Cobos et. al. \cite{Cobos2018a} applied a nested column generation approach in order to solve a strongly related problem on \enquote{case118.m} and achieved runtimes between $200s$ and $800s$. The considered instances in \cite{Cobos2018a} contain $|\Ncal_G||T| + 3|\Ncal_S| |T|$ first-level, $2|\Ncal_{DG}| |T|$ second-level and $|\Ncal_S| |T|$ third-level binary variables -- a significantly larger amount of binary variables compared to our instances since in \cite{Cobos2018a} we have $|\Ncal_G|= 54, |\Ncal_{DG}|=10, |\Ncal_S| = 6$ and $|T|=24$. In summary, the nested column generation in \cite{Cobos2018a} addresses $54\cdot 24 + 3\cdot 6 \cdot 24 + 2 \cdot 10 \cdot 24 + 6 \cdot 24 = 98 \cdot 24 = 2352$ binary decisions on a comparable instance. As Figure \ref{Fig: runtime} illustrates, the proposed algorithm solves the instance in $\approx 3$s, but with significantly fewer, namely $2\cdot 6\cdot 24 = 12 \cdot 24 = 288$ binary decisions. Thus, a direct comparison with \cite{Cobos2018a} seems rather inappropriate. 

Furthermore, the parametric programming approach presented in \cite{Avraamidou2019a} can be used to solve the ARO \eqref{Prob: entire_adjustable_RO}, even if \eqref{Prob: entire_adjustable_RO} is not weakly-connected. The same authors demonstrate, that an instance with $60$ binary variables on various levels can be solved within $15$s, see Table 7, Problem P5 in \cite{Avraamidou2020a}. As the number of binary variables of this instance is still significantly smaller than \enquote{case\_ieee30.m}, which the presented MIP framework can solve within $<1$s, it seems natural to conjecture, that the MIP approach outperforms the parametric programming approach on weakly-connected AROs in terms of runtime. However, we would like to highlight, that the work in \cite{Avraamidou2019a} rather aims at wide applicability as the authors present a significantly more general approach.


\section{Conclusion}
This article presents a new MIP framework to approximate adjustable robust programs with integer variables in the innermost (adjustment) stage. It is based on a weak connection between the separate stages and uses a McCormick envelope to strengthen the adversarial, thereby relaxing the ARO. We have proven that the resulting MIP provides feasible solutions for the first stage, that can be adjusted to a solution satisfying an objective at least as good as the ARO objective regardless of the realization of the uncertainty. In addition, we have provided a sufficient criterion, for the exactness of our approximation.

Moreover, we applied our results to model discrete adjustments of smart converters in a power system and provided numerical evidence, that our approach is competitive to previous methods such as the nested column generation or parametric programming in terms of runtime.
\section*{Acknowledgments}
\label{sec:acknowledgements}
We are grateful to Robert Burlacu and Qianwen Xu for stimulating discussions. The authors gratefully acknowledge Digital Futures for financially supporting the project.


\printbibliography

\end{document}